	\newtheorem{prop}{Proposition}[section]
	\newtheorem{theorem}{Theorem}[section]
	\newtheorem{corollary}{Corollary}[section]
	\newtheorem{lemma}{Lemma}[section]
	\newtheorem{definition}{Definition}[section]
	\newtheorem{example}{Example}[section]
	\DeclareMathOperator{\spec}{spec}
		\DeclareMathOperator{\Rea}{Re}
			\DeclareMathOperator{\Ima}{Im}
	\title{ Gain distance matrices for complex unit gain graphs}
\author{Aniruddha Samanta \thanks{Department of Mathematics, Indian Institute of Technology Kharagpur, Kharagpur 721302, India. Email: aniruddha.sam@gmail.com}\  \and M. Rajesh Kannan\thanks{Department of Mathematics, Indian Institute of Technology Kharagpur, Kharagpur 721302, India. Email: rajeshkannan@maths.iitkgp.ac.in, rajeshkannan1.m@gmail.com }}
\date{\today}
\begin{document}
\maketitle
\baselineskip=0.25in

\begin{abstract}
A complex unit gain graph ($ \mathbb{T} $-gain graph), $ \Phi=(G, \varphi) $ is a graph where the function $ \varphi $ assigns a unit complex number to each orientation of an edge of $ G $, and its inverse is assigned to the opposite orientation. 
In this article, we propose gain distance matrices for $ \mathbb{T} $-gain graphs. These notions generalize the corresponding known concepts of distance matrices and signed distance matrices. Shahul K. Hameed et al. introduced signed distance matrices and developed their properties. Motivated by their work, we establish several spectral properties, including some equivalences between balanced $ \mathbb{T} $-gain graphs and gain distance matrices. Furthermore, we introduce the notion of positively weighted $ \mathbb{T} $-gain graphs and study some of their properties. Using these properties, Acharya's and Stani\'c's spectral criteria for balance are deduced.  Moreover, the notions of order independence and distance compatibility are studied. Besides, we obtain some characterizations for distance compatibility.
\end{abstract}

{\bf Mathematics Subject Classification(2010):} 05C22(primary); 05C50, 05C35(secondary).

\textbf{Keywords.} Complex unit gain graph, Signed distance matrix, Distance matrix, Adjacency matrix, Hadamard product of matrices.

\section{Introduction}

Let $ \Phi=(G, \varphi) $ be a connected complex unit gain graph ($ \mathbb{T} $-gain graph) on a simple  graph $ G $ with $ n $ vertices.  Let $ V(G)=\{v_1, v_2, \dots, v_n\} $ and $ E(G) $ be the vertex set and the edge set of $ G $, respectively. If two vertices $ v_i $ and $ v_j $ are connected by an edge, then we write $ v_i\sim v_j $. If $ v_i\sim v_j $, then the  edge between them is denoted by $ e_{i,j} $.  The \emph{adjacency matrix } $ A(G) $ of a graph $ G $ is a symmetric matrix whose $ (i,j)th $ entry is $ 1 $ if $ v_i\sim v_j $ and zero otherwise. A \emph{path} $ P $ in $ G $ between the vertices $ s$ and $t $ is denoted by $ sPt $.
The \emph{distance} between  two vertices $ s$ and $t$ in $ G $ is the length of the shortest path between $ s$ and $t $, and is denoted by $ d_{G}(s,t) $ (or simply $ d(s,t) $). The \emph{distance matrix} of an undirected graph $ G $, denoted by $ D(G) $, is the symmetric $ n\times n$ matrix whose $ (i,j) $th entry is $ d(v_i,v_j) $. The distance matrix of an undirected graph has been widely studied in the literature, see \cite{Dist_Bapat,Graham, Lovasz, Graham_1971} and the references therein.

Let $ G $ be a simple undirected graph. An oriented edge from the vertex $ v_s $ to the vertex $ v_t $ is denoted by $ \overrightarrow{e}_{s,t} $. For each undirected edge $ e_{s,t}\in E(G) $, there is a pair of oriented edges $ \overrightarrow{e}_{s,t} $ and $ \overrightarrow{e}_{t,s} $. The collection $ \overrightarrow{E}(G):=\{ \overrightarrow{e}_{s,t},\overrightarrow{e}_{t,s}: e_{s,t}\in E(G)\} $ is  the \emph{oriented edge set associated with $ G $}. Let $ \mathbb{T}=\{ z\in \mathbb{C}: |z|=1\} $.  A \emph{complex unit gain graph (or $ \mathbb{T} $-gain graph)} on a simple graph $ G $ is an ordered pair $ (G, \varphi) $, where the gain function $ \varphi: \overrightarrow{E}(G) \rightarrow \mathbb{T} $ is a mapping  such that $ \varphi( \overrightarrow{e}_{s,t}) =\varphi(\overrightarrow{e}_{t,s})^{-1}$, for every $ e_{s,t}\in E(G) $. A $ \mathbb{T} $-gain graph $ (G, \varphi) $ is  denoted by $ \Phi $. The \emph{ adjacency matrix} of  a $ \mathbb{T} $-gain graph $ \Phi=(G, \varphi)$ is  a Hermitian  matrix, denoted by $ A(\Phi)$ and its $ (s,t)th $ entry is defined as follows:
$$a_{st}=\begin{cases}
	\varphi(\overrightarrow{e}_{s,t})&\text{if } \mbox{$v_s\sim v_t$},\\
	0&\text{otherwise.}\end{cases}$$
 The spectrum and the spectral radius of $ \Phi $ are the spectrum and the spectral radius of $ A(\Phi) $  and denoted by $ \spec(\Phi) $ and $ \rho(\Phi) $, respectively.
 A \emph{signed graph}  is a graph  $ G $ together with a signature function $ \psi : E(G) \rightarrow \{\pm 1\} $, and is denoted by $\Psi= (G, \psi) $. The adjacency matrix of $ \Psi $, denoted by $ A(\Psi) $, is an $ n\times n$ matrix whose $ (i,j) $th entry is $ \psi(e_{i,j}) $. Therefore, a signed graph can be considered as a $ \mathbb{T} $-gain graph $ \Psi=(G, \psi) $, where $ \psi $ is a signature function. The notion of adjacency matrix of $\mathbb{T}$-gain graphs generalize the notion of adjacency matrix of undirected graphs, adjacency matrix of  signed graphs and the Hermitian adjacency matrix of a mixed graph.  For more information about the properties of gain graphs and $\mathbb{T}$-gain graphs, we refer to \cite{  reff1, Our-paper-2, gain-genesis2, Zaslav}.

Let $ \Psi=(G, \psi ) $ be a signed graph. The sign of a path in $ \Psi $ is the product of sign of all edges of the path \cite{Zas4}. Recently, in \cite{Sign_distance} the authors introduced the notion of signed distance matrices $ D^{\max}(\Psi) $ and $ D^{\min}(\Psi) $ for a signed graph $ \Psi $.
\begin{definition}[{\cite[Definition 1.1]{Sign_distance}}]{\rm
Let	$ \Psi=(G, \psi ) $ be a signed graph with vertex set $ V(G) =\{ v_1, v_2, \dots, v_n\}$. Then two auxiliary signs are defined as follows:
\begin{itemize}
	\item[(a)] $ \psi_{\max}(v_i,v_j)=-1 $ if all shortest $ v_iv_j $-paths are negative, $ +1 $ otherwise,
	\item[(b)] $ \psi_{\min}(v_i,v_j)=+1 $ if all shortest $ v_iv_j $-paths are positive, $ -1 $ otherwise.
\end{itemize}
The two signed distance matrices are defined as follows:
\begin{itemize}
	\item[(a)] $ D^{\max}(\Psi)=(d_{\max}(v_i,v_j))_{n\times n}$,
	\item[(b)] $ D^{\min}(\Psi)=(d_{\min}(v_i,v_j))_{n\times n} $,
\end{itemize}
where $ d_{\max}(v_i,v_j)=\psi_{\max}(v_i,v_j)d(v_i,v_j)$ and $ d_{\min}(v_i,v_j)=\psi_{\min}(v_i,v_j)d(v_i,v_j)$.}
\end{definition}

A signed graph $ \Psi $ is distance compatible if and only if $ D^{\max}(\Psi)=D^{\min}(\Psi) $. A characterization of balanced signed graph in terms of signed distance matrices is obtained in \cite{Sign_distance}. For more about signed distance matrices, see \cite{Sign_distance, shijin2020signed}.

In this article, we introduce the notion of gain distance matrices $ D^{\max}_{<} (\Phi)$ and $ D^{\min}_{<} (\Phi) $ for a $ \mathbb{T} $-gain graph $ \Phi=(G, \varphi) $ associated with an ordered vertex set $ (V(G), <) $. These concepts generalize the notions of signed distance matrices of signed graphs and distance matrices of undirected graphs.  We define positively weighted $ \mathbb{T} $-gain graphs and establish two new characterizations for balance of gain graphs. Acharya's Spectral criterion and Stani\'c's spectral criterion are particular cases of these characterizations. Besides, we introduce two properties of a $ \mathbb{T} $-gain graph, ordered-independence, and distance compatibility to gain distance matrices. Thereupon we establish two characterizations for the balance of $ \mathbb{T} $-gain graphs in terms of gain distance matrices and distance compatibility properties. Subsequently, we present some results on the characterization of distance compatibility.

This paper is organized as follows: In section \ref{prelim}, we collect needed  known definitions and results. In section \ref{gain_distance}, we define the notion of gain distance matrices, order-independent and distance compatibility, and discuss their properties. In section \ref{Positively_weighted}, we discuss the positively weighted $ \mathbb{T} $-gain graphs and establish two spectral characterizations for the balance(Theorem \ref{th4.2}, Theorem \ref{Th3.4}). In section \ref{Char_of_balanced}, we derive two characterizations for  balance  $ \mathbb{T} $-gain graph in terms of the gain distance matrices (Theorem \ref{Th5.1}, Theorem \ref{th5.2}). In section \ref{Dist_compt}, we obtain a couple of  characterizations for distance compatible $ \mathbb{T} $-gain graphs (Theorem \ref{Th6.1}, Theorem \ref{Th6.2}, Theorem \ref{Th6.3}).
	
\section{Definitions, notation and preliminary results}\label{prelim}

Let $ G=(V(G),E(G))$ be a connected undirected graph with no loops and multiple edges, where $ V(G)=\{ v_1, v_2, \dots, v_n\} $ is the vertex set and $ E(G) $ is the edge set of $ G $. A graph $ G $ is  \emph{geodetic}, if there exists a unique shortest path between any two vertices of $ G $. Let $ \Phi=(G, \varphi) $ be a $ \mathbb{T} $-gain graph on $ G $. For $ s,t \in V(G) $, $ sPt $ denotes a path starts at $ s $ and ends at $ t $ in $ G $. In case of gain graph, $ sPt $ denotes the oriented path from the vertex $ s $ to the vertex $ t $. The gain of the path $ sPt$ is $ \varphi(sPt)=\prod\limits_{j=1}^{k}\varphi(\overrightarrow{e_j})$, where $ \overrightarrow{e_1}, \overrightarrow{e_2}, \dots, \overrightarrow{e_k} $ are the consecutive oriented edges in $ sPt$. Therefore, $\varphi(tPs)= \overline{\varphi(sPt)}$.  The gain of an oriented cycle $ \overrightarrow{C_n} $ with edges $ \overrightarrow{e_1}, \overrightarrow{e_2}, \dots, \overrightarrow{e_n} $ is $\varphi(\overrightarrow{C_n})=\prod\limits_{j=1}^{n}\varphi(\overrightarrow{e_j}) $. A cycle $ C $ is  \emph{neutral} in $ \Phi $ if $ \varphi(\overrightarrow{C})=1 $. A gain graph $ \Phi $ is  \emph{balanced}, if all cycles in $ \Phi $ are neutral. A $ \mathbb{T} $-gain graph $ \Phi $ is  \emph{anti-balanced} if $ -\Phi $ is balanced. Let $ \Rea(x) $ and $ \Ima(x) $ denote the real and imaginary part of a complex number $ x $, respectively.

A function $ \zeta: V(G) \rightarrow \mathbb{T} $ is  a \emph{switching function}. Let $ \Phi_1=(G, \varphi_1) $ and $ \Phi_2=(G, \varphi_2) $ be two $ \mathbb{T} $-gain graphs. Then $ \Phi_1 $ and $ \Phi_2 $ are  \emph{switching equivalent}, denoted by $ \Phi_1 \sim \Phi_2 $, if there exists a switching function $ \zeta $ such that $ \varphi_1(\overrightarrow{e}_{i,j})= \zeta(v_i)^{-1} \varphi_2(\overrightarrow{e}_{i,j})\zeta(v_j)$, for all $ e_{i,j}\in E(G) $. If $ \Phi_1 \sim \Phi_2 $, then $ A(\Phi_1) $ and $ A(\Phi_2) $ are diagonally similar and hence have the same spectra.

\begin{lemma}[{\cite[Corollary 3.2]{Our-paper-2}}]\label{lm2.1}
	Let $ \Phi_1 $ and $ \Phi_2 $ be two $ \mathbb{T} $-gain graphs on a connected graph $ G $ with a normal spanning tree $ T $. Then $ \Phi_1 \sim \Phi_2 $ if and only if $ \varphi_1(\overrightarrow{C_j})=\varphi_2(\overrightarrow{C_j}) $, for all fundamental cycles $ C_j $ with respect to $ T $.
\end{lemma}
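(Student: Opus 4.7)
The plan is to handle the two directions of the equivalence separately, relying on the fact that $\mathbb{T}$ is an abelian group and that the spanning tree $T$ furnishes a canonical basis of fundamental cycles.

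For the forward direction, suppose $\Phi_1\sim\Phi_2$ via a switching function $\zeta$. For any oriented cycle $\overrightarrow{C}$ with vertex sequence $v_{i_1}\to v_{i_2}\to\cdots\to v_{i_k}\to v_{i_1}$, I would apply the defining identity $\varphi_1(\overrightarrow{e}_{i_\ell,i_{\ell+1}})=\zeta(v_{i_\ell})^{-1}\varphi_2(\overrightarrow{e}_{i_\ell,i_{\ell+1}})\zeta(v_{i_{\ell+1}})$ to each consecutive edge and multiply. Because $\mathbb{T}$ is commutative, the $\zeta$-factors reorganize into a product $\prod_\ell\zeta(v_{i_\ell})^{-1}\zeta(v_{i_{\ell+1}})$ that telescopes to $1$, leaving $\varphi_1(\overrightarrow{C})=\varphi_2(\overrightarrow{C})$. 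Specializing $\overrightarrow{C}$ to each fundamental cycle $\overrightarrow{C_j}$ gives the desired equality.

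For the converse, I would construct the required switching function $\zeta$ directly from $T$. Fix a root $r$ of $T$, set $\zeta(r)=1$, and define $\zeta$ recursively along $T$: whenever $v_j$ is a child of $v_i$ via a tree edge $e_{i,j}$, put $\zeta(v_j):=\zeta(v_i)\,\varphi_1(\overrightarrow{e}_{i,j})\,\varphi_2(\overrightarrow{e}_{i,j})^{-1}$. Because $T$ is a spanning tree, this defines $\zeta$ unambiguously on every vertex, and the switching identity holds on every tree edge by construction. The remaining task is to check the identity on each non-tree edge $e_{i,j}$, which together with the tree path from $v_j$ back to $v_i$ forms the unique fundamental cycle $C_j$. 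Unrolling the recursion along this tree path expresses $\zeta(v_i)^{-1}\zeta(v_j)$ as a telescoping product of $\varphi_1\varphi_2^{-1}$-factors; substituting this into the hypothesis $\varphi_1(\overrightarrow{C_j})=\varphi_2(\overrightarrow{C_j})$ and cancelling the common tree-path factor yields exactly $\varphi_1(\overrightarrow{e}_{i,j})=\zeta(v_i)^{-1}\varphi_2(\overrightarrow{e}_{i,j})\zeta(v_j)$.

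The main obstacle is keeping the orientations of tree edges, non-tree chords, and the direction of traversal around each $C_j$ mutually consistent when combining the recursive definition of $\zeta$ with the fundamental-cycle hypothesis. Once a single convention (root, parent-to-child orientation of tree edges, and orientation of each $\overrightarrow{C_j}$) is fixed, the remainder is routine bookkeeping in the abelian group $\mathbb{T}$, with no topological subtleties since normal spanning trees exist in every connected graph.
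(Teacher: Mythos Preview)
Your argument is correct and is the standard proof of this switching--fundamental-cycle criterion for gain graphs. Note, however, that the paper does not supply its own proof of this lemma: it is stated in the preliminaries as a citation of \cite[Corollary 3.2]{Our-paper-2}, so there is no in-paper argument to compare against. Your construction of $\zeta$ along the spanning tree and the telescoping verification on chords is exactly the approach one expects for this result; the only cosmetic point is that the adjective ``normal'' for the spanning tree plays no role in either direction of your proof (any spanning tree suffices), which is consistent with how the lemma is actually used later in the paper.
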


 A signed graph is a $ \mathbb{T} $-gain graph $ \Psi=(G, \psi) $, where $ \psi(\overrightarrow{e}_{i,j})=1$ or $ -1 $ for $ e_{i,j}\in E(G) $.  The \emph{sign} of a path in $\Psi$ is the product of the signs (the gains) of the edges in the path.

\begin{theorem}[Harary's path criterion \cite{Harary} ]\label{lm.2.3}
	Let $ \Psi $ be a signed graph on an underlying graph $ G $. Then $ \Psi $ is balanced if and only if any pair of vertices $ s,t $, every $ st $-path have the same signature.
\end{theorem}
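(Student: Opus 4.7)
The plan is to establish the two implications separately, using a cycle-splitting argument for one direction and switching equivalence (through Lemma \ref{lm2.1}) for the other. Throughout, I will use the fact that since signs are $\pm 1$, the sign of a path is invariant under reversing the traversal.

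For the direction $(\Leftarrow)$, I assume every pair of $st$-paths carries the same signature. To show $\Psi$ is balanced, I take an arbitrary cycle $C$ and pick two vertices $s,t$ on $C$. The cycle decomposes into two internally disjoint arcs $P_1$ and $P_2$, both of them $st$-paths, and by hypothesis $\psi(P_1)=\psi(P_2)$. Since traversing $C$ corresponds to traversing $P_1$ followed by $P_2$ in reverse, I get $\psi(C)=\psi(P_1)\psi(P_2)=\psi(P_1)^{2}=1$, so every cycle is neutral.

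For the direction $(\Rightarrow)$, suppose $\Psi$ is balanced. Then every fundamental cycle with respect to any normal spanning tree $T$ of $G$ has sign $1$, which matches the all-positive signed graph $\Psi_{0}=(G,+1)$. By Lemma \ref{lm2.1}, $\Psi\sim\Psi_{0}$, so there is a switching function $\zeta:V(G)\rightarrow\{\pm 1\}$ with
$$\psi(e_{s,t})=\zeta(v_s)\,\zeta(v_t) \quad \text{for every edge } e_{s,t}\in E(G).$$
Given any $st$-path $P$ with vertex sequence $v_s=u_0,u_1,\dots,u_k=v_t$, I telescope:
$$\psi(P)=\prod_{i=0}^{k-1}\zeta(u_i)\,\zeta(u_{i+1})=\zeta(v_s)\,\zeta(v_t),$$
because each internal vertex contributes the factor $\zeta(u_i)^{2}=1$. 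Hence $\psi(P)$ depends only on the endpoints, and any two $st$-paths must share the same sign.

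The step I expect to be the main obstacle is the $(\Rightarrow)$ direction. A direct combinatorial approach would take two $st$-paths $P_1,P_2$, concatenate them into the closed walk $P_1\cdot P_2^{-1}$, and try to decompose it into simple cycles (each neutral by balance) plus edges traversed an even number of times (each contributing $+1$); but when $P_1$ and $P_2$ share edges or subpaths this bookkeeping becomes cumbersome. Invoking Lemma \ref{lm2.1} converts balance into an explicit vertex potential $\zeta$ and resolves the issue in one line via telescoping.
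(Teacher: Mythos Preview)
Your proof is correct. Note, however, that the paper does not supply its own proof of this statement; Theorem~\ref{lm.2.3} is quoted from Harary as a preliminary result. The closest thing to a ``paper's proof'' is Lemma~\ref{lm3.1}, the $\mathbb{T}$-gain generalization, which the paper does prove.

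Comparing with that: for the forward direction the paper argues directly by contradiction --- if two $st$-paths $sP_1t$ and $sP_2t$ had different gains, then the closed walk $sP_1t\cdot tP_2s$ has gain $\ne 1$ and decomposes as a product over the cycles $C_1,\dots,C_k$ it bounds, forcing some $\varphi(\overrightarrow{C_j})\ne 1$. This is precisely the combinatorial route you anticipated as cumbersome, and indeed the paper's write-up is terse (it even writes $\sum_j$ where $\prod_j$ is meant, and does not spell out the shared-edge case). Your approach via Lemma~\ref{lm2.1} trades that bookkeeping for an explicit potential $\zeta$ and a one-line telescoping identity; it is cleaner, and it makes transparent \emph{why} the path sign depends only on the endpoints. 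For the converse direction your cycle-splitting argument is the standard one, matching what the paper dismisses as ``easy to verify.''
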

Let $ \mathbb{C}^{m\times n} $ denote the set of all $ m\times n $ matrices with complex entries. For $ A=(a_{ij})\in \mathbb{C}^{n \times n} $, define $ |A|=(|a_{ij}|) $. For two matrices $ A=(a_{ij})$ and $ B=(b_{ij})$, we write   $ A \leq B $ if  $ a_{ij} \leq b_{ij} $ for all $ i,j $. A matrix is \emph{non-negative}, if all entries of a matrix are non-negative.  The spectral radius of a matrix $ A $ is denoted by $ \rho(A) $.
\begin{theorem}[{\cite[Theorem 8.4.5]{horn-john2}}]\label{th2.2}
Let $ A, B \in \mathbb{C}^{n \times n}$. Suppose $ A $ is irreducible and non-negative and $ A \geq |B| $. Let $ \mu=e^{i\theta} \rho(B)$ be a given maximum modulus eigenvalue of $ B $. If $ \rho(A)=\rho(B) $, then there is a unitary diagonal matrix $ D $ such that $ B=e^{i\theta}DAD^{-1} $.
\end{theorem}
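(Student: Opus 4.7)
The plan is to combine Perron-Frobenius theory for the irreducible non-negative matrix $A$ with an analysis of equality in the entrywise triangle inequality $|By| \le |B||y| \le A|y|$, applied to an eigenvector $y$ of $B$ corresponding to $\mu = e^{i\theta}\rho(B)$.

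First I would recall that since $A$ is non-negative and irreducible, $\rho(A)$ is a simple eigenvalue of $A$ admitting a strictly positive eigenvector, and that any non-negative vector $z \neq 0$ with $Az \ge \rho(A)z$ is in fact a positive multiple of the Perron vector and satisfies $Az = \rho(A)z$. Fixing an eigenvector $y$ of $B$ with $By = \mu y$ and taking entrywise absolute values gives $\rho(A)|y| = |\mu||y| = |By| \le |B||y| \le A|y|$, using $|B| \le A$. The Perron-Frobenius principle then forces equality throughout: $A|y| = \rho(A)|y|$, the vector $|y|$ is strictly positive, and $(A - |B|)|y| = 0$. Combined with $A \ge |B|$ and $|y| > 0$, the last identity yields $|B| = A$ entrywise.

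Next, writing $y_j = e^{i\alpha_j}|y_j|$ and setting $D = \mathrm{diag}(e^{i\alpha_1},\ldots,e^{i\alpha_n})$, a unitary diagonal matrix with $y = D|y|$, I would examine $C := D^{-1} B D$. By construction $|C| = |B| = A$, and from $By = \mu y$ one computes $C|y| = D^{-1} B D |y| = D^{-1} B y = \mu|y| = e^{i\theta}\rho(A)|y| = e^{i\theta} A |y|$. Row by row this reads $\sum_j C_{ij}|y_j| = e^{i\theta} \sum_j |C_{ij}||y_j|$, and since the modulus of the left side equals the sum of the moduli of its terms, the equality case of the triangle inequality forces each nonzero $C_{ij}|y_j|$ to have argument $\theta$. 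Because $|y_j| > 0$ for every $j$, this gives $C_{ij} = e^{i\theta}|C_{ij}| = e^{i\theta} A_{ij}$ for all $i,j$, so $D^{-1} B D = e^{i\theta} A$ and therefore $B = e^{i\theta} D A D^{-1}$.

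The main obstacle is the phase-alignment step: translating the entrywise equality case of the triangle inequality into a uniform phase identity on all entries of $C$. Strict positivity of $|y|$, obtained from the Perron-Frobenius step, is essential so that no phase is indeterminate and so that $|B| = A$ follows entrywise; once these are secured, the remainder is a routine argument.
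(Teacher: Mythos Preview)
The paper does not give a proof of this statement: it is quoted as a preliminary result from Horn and Johnson \cite[Theorem 8.4.5]{horn-john2} and used as a black box in the proofs of Theorem~\ref{Th3.4}. Your argument is correct and is essentially the classical Wielandt proof that appears in Horn--Johnson: take an eigenvector $y$ for $\mu$, use $\rho(A)|y|=|By|\le |B||y|\le A|y|$ together with Perron--Frobenius for the irreducible $A$ to force equality throughout, conclude $|y|>0$ and $|B|=A$, then diagonalize the phases of $y$ to reduce to the equality case of the triangle inequality row by row. There is nothing to compare against in the paper itself, but your write-up matches the standard source it cites.
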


Let $ A=(a_{ij}), B=(b_{ij})\in \mathbb{C}^{m\times n} $. The \emph{Hadamard product} of $ A $ and $ B $, denoted by $ A\circ B $,  is defined as $  A\circ B =(a_{ij}b_{ij})_{m\times n}$. For any three matrices $ A, B, C $ of same order, $ (A \circ B) \circ C=A \circ (B \circ C) $.
Let us recall the following property of Hadamard product of matrices.
\begin{prop}[{\cite[Lemma 5.1.2]{Matrix_Analysis}}]\label{prop3.1}
	Let $ A,B,C $ be three $n \times n$ matrices  and $ D,E $ be two $n \times n$ diagonal matrices. Then
	\begin{equation*}
		 D(A\circ B)E=(DAE)\circ B=(DA)\circ (BE)=(AE)\circ (DB)=A\circ(DBE).
	\end{equation*}
\end{prop}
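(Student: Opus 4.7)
The plan is to verify the four equalities in the chain by a direct entry-wise computation, leaning on the single observation that multiplication by a diagonal matrix on the left (resp.\ right) merely rescales the rows (resp.\ columns) of a matrix, which is an operation that commutes entrywise with the Hadamard product.

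First, I would fix notation: write $D=\operatorname{diag}(d_1,\dots,d_n)$, $E=\operatorname{diag}(e_1,\dots,e_n)$, $A=(a_{ij})$, and $B=(b_{ij})$. A one-line calculation shows that for any $n\times n$ matrix $M=(m_{ij})$, the $(i,j)$ entry of $DME$ equals $d_i\, m_{ij}\, e_j$. Applied with $M=A\circ B$, this immediately gives that the $(i,j)$ entry of $D(A\circ B)E$ is $d_i\, a_{ij}\, b_{ij}\, e_j$.

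Next, I would simply read off the $(i,j)$ entry of each of the four remaining matrices:
\begin{align*}
\bigl[(DAE)\circ B\bigr]_{ij} &= (d_i a_{ij} e_j)\, b_{ij}, \\
\bigl[(DA)\circ (BE)\bigr]_{ij} &= (d_i a_{ij})(b_{ij} e_j), \\
\bigl[(AE)\circ (DB)\bigr]_{ij} &= (a_{ij} e_j)(d_i b_{ij}), \\
\bigl[A\circ (DBE)\bigr]_{ij} &= a_{ij}\,(d_i b_{ij} e_j).
\end{align*}
Each of these is the same complex scalar $d_i a_{ij} b_{ij} e_j$ with a different parenthesization, so commutativity and associativity of complex multiplication give entrywise equality of all five expressions, and hence equality of the matrices.

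There is no genuine obstacle here; the proof is a bookkeeping exercise with indices. The one conceptual point worth flagging is that diagonality of $D$ and $E$ is essential: left-multiplication by a non-diagonal matrix would form linear combinations of rows and therefore would not distribute over the Hadamard product in this fashion.
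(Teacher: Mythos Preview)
Your proof is correct: the entrywise computation is exactly the right way to see this, and every step is valid. The paper itself does not supply a proof of this proposition---it simply quotes the result from the cited reference---so there is nothing to compare against.
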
 	
	
\section{Gain distance matrices}\label{gain_distance}
This section introduces the notion of gain distance matrices of $ \mathbb{T} $-gain graphs, which generalize the notion of distance matrices of undirected graphs and signed distance matrices of signed graphs.

  Let $ \Phi=(G, \varphi) $ be a connected $ \mathbb{T} $-gain graph on $ G $.  For  $ s,t\in V(G) $, $sPt$ denotes the oriented path from the  vertex $s$ to the vertex $t$.  Define three sets of paths $ \mathcal{P}(s,t), \mathcal{P}^{\max}(s,t) $ and $ \mathcal{P}^{\min}(s,t) $ as follows:

\begin{center}
	$ \mathcal{P}(s,t)=\left\{ sPt: sPt \text{ is a shortest path} \right\},$
\end{center}

\begin{center}
	$ \mathcal{P}^{\max}(s,t)=\left\{ sPt \in \mathcal{P}(s,t):   \Rea(\varphi(sPt))=\max\limits_{s\tilde{P}t \in  \mathcal{P}(s,t)}\Rea(\varphi(s\tilde{P}t)) \right\}$
\end{center}
and
\begin{center}
	$ \mathcal{P}^{\min}(s,t)=\left\{ sPt \in \mathcal{P}(s,t):   \Rea(\varphi(sPt))=\min\limits_{s\tilde{P}t \in  \mathcal{P}(s,t)}\Rea(\varphi(s\tilde{P}t)) \right\}.$
\end{center}
Note that $\mathcal{P}^{\max}(s,t) = \mathcal{P}^{\max}(t, s) $ and $\mathcal{P}^{\min}(s,t) =  \mathcal{P}^{\min}(t,s)$.

Let $ G $ be a simple graph with vertex set $  V(G) =\{ v_1, v_2, \dots, v_n\} $. We denote $ \left(V(G), <\right)$ as an ordered vertex set, where $`<`$ is a total ordering of the vertices of $ G $. An ordering $ `<_r` $ is  the \emph{reverse ordering} of $ `<` $ if $ v_i<_r v_j $ if and only if $ v_j<v_i $, for any $ i,j $. An ordering $ `<` $ is  the \emph{standard vertex ordering} if  $ v_1<v_2<\dots <v_n $.
\begin{definition}[Auxiliary gains]{\rm
		Let $ \Phi=(G, \varphi) $ be a $ \mathbb{T} $-gain graph with an ordered vertex set $( V(G), <)$. We define two auxiliary gains with respect to $ < $ as follows. 	
	\begin{enumerate}
		\item[(1)] The function $ \varphi^{<}_{\max}:V(G)\times V(G)\rightarrow \mathbb{T} $ is  the maximum auxiliary gain with respect to the vertex order $ < $  such that $ \varphi^{<}_{\max}(s,t)=\overline{\varphi^{<}_{\max}(t,s)} $ for each $ (s,t)\in V(G)\times V(G) $ and $ \varphi^{<}_{\max} $ is defined by $$ \varphi^{<}_{\max}(s,t)=	\varphi(sPt)$$
		where $ s<t $, and $sPt\in \mathcal{P}^{\max}(s,t) $ and $ \Ima(\varphi(sPt))=\max\limits_{s\tilde{P}t \in \mathcal{P}^{\max}(s,t)}\Ima(\varphi(s\tilde{P}t)).$
		\item [(2)]  The function $ \varphi^{<}_{\min}:V(G)\times V(G)\rightarrow \mathbb{T} $ is  the minimum auxiliary gain with respect to the vertex order $ < $  such that $ \varphi^{<}_{\min}(s,t)=\overline{\varphi^{<}_{\min}(t,s)} $ for each $ (s,t)\in V(G)\times V(G) $ and $ \varphi^{<}_{\min} $ is defined by $$ \varphi^{<}_{\min}(s,t)=	\varphi(sPt)$$
		where $ s<t $, and $sPt\in \mathcal{P}^{\min}(s,t) $ and $ \Ima(\varphi(sPt))=\min\limits_{s\tilde{P}t \in \mathcal{P}^{\min}(s,t)}\Ima(\varphi(s\tilde{P}t)).$
		
\end{enumerate}}
\end{definition}
Note that, for $s< t$,  $ \varphi^{<}_{\max}(s, t)  (\mbox{~resp.,~} \varphi^{<}_{\min}(s, t)  )$ is the maximum (resp., minimum) gain, with respect to the lexicographic order,  over all the shortest paths between the vertices $s$ and $t$.
\begin{definition}[Gain distances]{\rm
	Let $ \Phi=(G, \varphi) $ be a $ \mathbb{T} $-gain graph with an ordered vertex set $ (V(G), <) $. For any two vertices $ s,t\in V(G)$, there are two gain distances from the vertex $ s $ to the vertex $ t $  which are defined as follows:
	\begin{enumerate}
		\item[(1)] $ d^{<}_{\max}(s,t)=\varphi^{<}_{\max}(s,t)d(s,t),$
		\item[(2)] 	$ d^{<}_{\min}(s,t)=\varphi^{<}_{\min}(s,t)d(s,t).$
	\end{enumerate}}
\end{definition}

Next we define the gain distance matrices for $ \mathbb{T} $-gain graphs.
\begin{definition}[Gain distance matrices]{\rm
	Let $ \Phi=(G, \varphi) $ be a  $ \mathbb{T} $-gain graph with an order $  < $ on the vertex set $V(G)$, where $ V(G)=\{ v_1, v_2, \cdots, v_n\} $. The gain distance matrices $ D_{<}^{\max}(\Phi) $ and $ D_{<}^{\min}(\Phi) $ associated with $ < $ are defined as follows:
	\begin{enumerate}
		\item[(1)] $ D_{<}^{\max}(\Phi)= \left( d^{<}_{\max}(v_i, v_j)\right)$,
		\item[(2)] $ D_{<}^{\min}(\Phi)= \left( d^{<}_{\min}(v_i, v_j) \right)$.
	\end{enumerate} Here $d^{<}_{\max}(v_i, v_j)$ is the $(i, j)$th entry of $ D_{<}^{\max}(\Phi).$
}
\end{definition}

The gain distance matrices are the generalization of the distance matrix of an undirected graph and signed distance matrices of a signed graph. It is easy to see that the gain distance matrices $ D^{\max}_{<}(\Phi) $ and $ D^{\min}_{<}(\Phi) $ are Hermitian, and hence have real eigenvalues. For any pair of vertices $ (v_s,v_t) $, there are two different maximum gain distances $ d^{<}_{\max}(v_s,v_t) $ and $ d^{<}_{\max}(v_t,v_s) $ which have same absolute value but they are the complex conjugate to each other. The distance $ d^{<}_{\max}(v_s,v_t) $ is  the maximum gain distance from the vertex $ v_s $ to the vertex $ v_t $ in $ \Phi $ with respect to the vertex ordering $ (V(G), <) $. Likewise, the minimum gain distance is defined.

Now we illustrate the definitions with the following example.
\begin{figure} [!htb]
	\begin{center}
		\includegraphics[scale= 0.55]{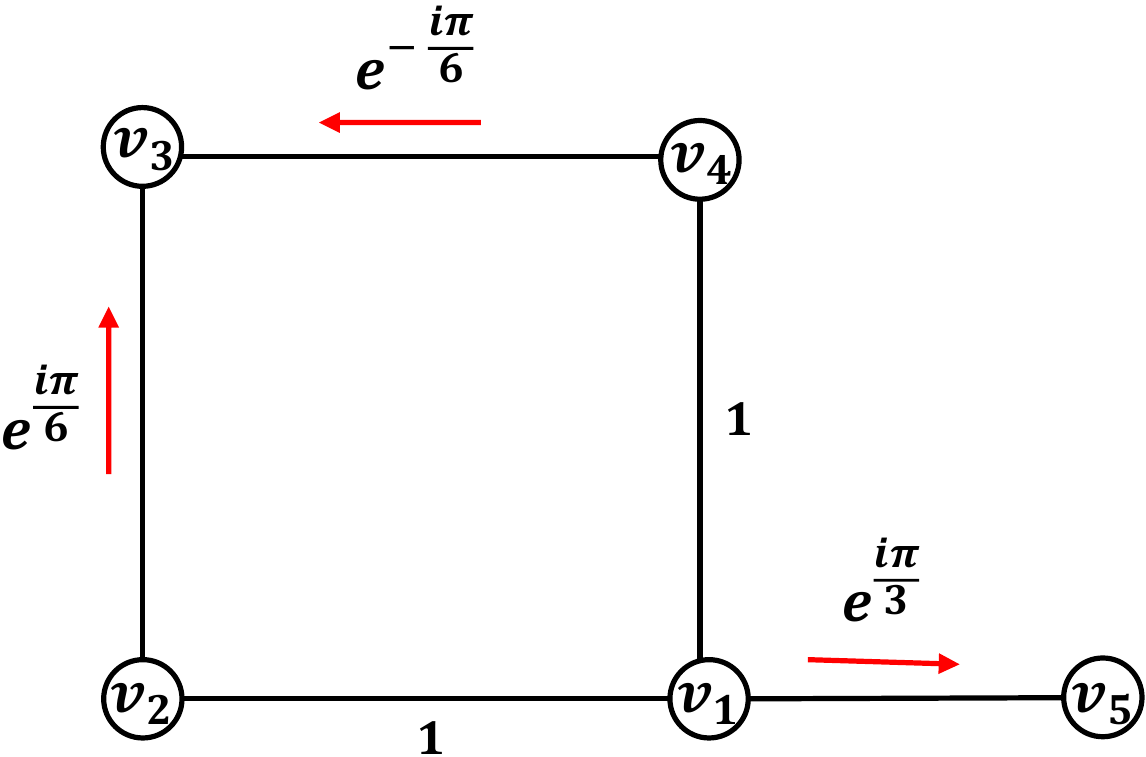}
		\caption{ $ \mathbb{T} $-gain graphs $ \Phi=(G, \varphi) $} \label{fig1.2}
	\end{center}
\end{figure}

\begin{example}\label{ex3.1}{\rm
	Let $ \Phi=(G, \varphi) $ be a $ \mathbb{T} $-gain graph shown in Figure \ref{fig1.2}. Let us consider the standard order $ < $ on vertex set $V(G)$, where $ V(G)=\{v_1,v_2, v_3, v_4, v_5\} $. Then
	\begin{equation*}
		D^{\max}_{<}(\Phi)=\left[\begin{array}{ccccc}
			0 & 1 & 2e^{\frac{i\pi}{6}} & 1 & e^{\frac{i\pi}{3}}\\
			1 & 0 & e^{\frac{i\pi}{6}} & 1 & 2e^{\frac{i\pi}{3}}\\
			2e^{-\frac{i\pi}{6}} & e^{-\frac{i\pi}{6}} & 0 & e^{\frac{i\pi}{6}} & 3e^{\frac{i\pi}{6}}\\
			1 & 1 & e^{-\frac{i\pi}{6}} & 0 & 2e^{\frac{i\pi}{3}}\\
			e^{-\frac{i\pi}{3}} & 2e^{-\frac{i\pi}{3}} & 3e^{-\frac{i\pi}{6}} & 2e^{-\frac{i\pi}{3}} & 0
		\end{array} \right].
	\end{equation*}
	Now, consider the reverse ordering $` <_r` $ of the standard  order $ `<` $. Then
	\begin{equation*}
		D^{\max}_{<_r}(\Phi)=\left[\begin{array}{ccccc}
			0 & 1 & 2e^{-\frac{i\pi}{6}} & 1 & e^{\frac{i\pi}{3}}\\
			1 & 0 & e^{\frac{i\pi}{6}} & 1 & 2e^{\frac{i\pi}{3}}\\
			2e^{\frac{i\pi}{6}} & e^{-\frac{i\pi}{6}} & 0 & e^{\frac{i\pi}{6}} & 3e^{-\frac{i\pi}{6}}\\
			1 & 1 & e^{-\frac{i\pi}{6}} & 0 & 2e^{\frac{i\pi}{3}}\\
			e^{-\frac{i\pi}{3}} & 2e^{-\frac{i\pi}{3}} & 3e^{\frac{i\pi}{6}} & 2e^{-\frac{i\pi}{3}} & 0
		\end{array} \right].
	\end{equation*}
	Here $  D^{\max}_{<}(\Phi) \ne D^{\max}_{<_r}(\Phi)$. In fact, $ \spec(D^{\max}_{<}(\Phi)) \ne \spec(D^{\max}_{<_r}(\Phi))$. Similarly,\break $ D^{\min}_{<}(\Phi) \ne D^{\min}_{<_r}(\Phi)$.}
\end{example}

\begin{definition}[Vertex order independent]{\rm
		A $ \mathbb{T} $-gain graph $ \Phi=(G, \varphi) $ is  vertex order-independent (simply, order independent), if $ D^{\max}_{<}(\Phi)=D^{\max}_{<_r}(\Phi)$ and $ D^{\min}_{<}(\Phi)=D^{\min}_{<_r}(\Phi) $, where $  < $ is the standard vertex order on $V(G)$. In this case, we define\break $D^{\max}(\Phi)=D^{\max}_{<}(\Phi)=D^{\max}_{<_r}(\Phi)$ and $ D^{\min}(\Phi)= D^{\min}_{<}(\Phi)=D^{\min}_{<_r}(\Phi)$.}
\end{definition}


Now we present a characterization for order independent $ \mathbb{T} $-gain graph.

\begin{theorem}
	Let $ \Phi=(G, \varphi)$ be a $ \mathbb{T} $-gain graph. Then $ \Phi $ is not order independent if and only if at least one of the following holds.
	\begin{enumerate}
		\item[(i)] There exists $ v_s, v_t \in V(G)$ with at least two shortest paths from $v_s$ to $v_t$ in $ \mathcal{P}^{\max}(v_s,v_t) $ have different gains.
		\item [(ii)]There exists $ v_s, v_t \in V(G)$ with at least two shortest paths  from $v_s$ to $v_t$ in $ \mathcal{P}^{\min}(v_s,v_t) $ have different gains.
	\end{enumerate}
\end{theorem}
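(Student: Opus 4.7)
My plan is to unpack precisely how the reverse ordering $<_r$ changes the tie-breaking in the definitions of $\varphi^<_{\max}$ and $\varphi^<_{\min}$, and then observe that the two orderings agree on a given entry exactly when the ``secondary'' tie-breaking by imaginary part is trivial on $\mathcal{P}^{\max}(v_s,v_t)$ (respectively $\mathcal{P}^{\min}(v_s,v_t)$).

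I would fix vertices $v_s,v_t$ with $v_s<v_t$ under the standard order, so that $v_t<_r v_s$ under the reverse ordering. By unfolding the definition, $\varphi^<_{\max}(v_s,v_t)=\varphi(sPt)$ where $sPt\in\mathcal{P}^{\max}(v_s,v_t)$ maximizes $\Ima(\varphi(sPt))$. On the other side, $\varphi^{<_r}_{\max}(v_s,v_t)=\overline{\varphi^{<_r}_{\max}(v_t,v_s)}$ where $\varphi^{<_r}_{\max}(v_t,v_s)$ is the gain of a path from $v_t$ to $v_s$ (reverse direction) maximizing first the real part and then the imaginary part. Since $\Re(\overline{z})=\Re(z)$ and $\Ima(\overline{z})=-\Ima(z)$, the set of real-part maximizers is again $\mathcal{P}^{\max}(v_s,v_t)$ read in reverse, but maximizing the imaginary part in the reverse direction corresponds to \emph{minimizing} it in the forward direction. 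After conjugating back, this gives $\varphi^{<_r}_{\max}(v_s,v_t)=\varphi(sP't)$ where $sP't\in\mathcal{P}^{\max}(v_s,v_t)$ \emph{minimizes} $\Ima(\varphi(sP't))$. An analogous computation works for $\varphi^{\min}$, with ``real-part minimizers'' and the same flip on the imaginary tie-break.

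From this, the $(s,t)$-entries of $D^{\max}_{<}(\Phi)$ and $D^{\max}_{<_r}(\Phi)$ coincide if and only if the maximum and minimum of $\Ima(\varphi(\cdot))$ on $\mathcal{P}^{\max}(v_s,v_t)$ coincide; but all paths in $\mathcal{P}^{\max}(v_s,v_t)$ already share the same real part, so this in turn is equivalent to all paths in $\mathcal{P}^{\max}(v_s,v_t)$ having the same gain. The same dichotomy applies to $D^{\min}_{<}(\Phi)$ versus $D^{\min}_{<_r}(\Phi)$ using $\mathcal{P}^{\min}(v_s,v_t)$.

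Putting it together, $\Phi$ fails to be order independent exactly when some pair $(v_s,v_t)$ witnesses a mismatch in either $D^{\max}$ or $D^{\min}$; by the equivalence just established, such a mismatch in $D^{\max}$ is precisely condition (i), and a mismatch in $D^{\min}$ is precisely condition (ii). The only subtlety I expect, and therefore the step worth writing carefully, is the bookkeeping around the conjugation: one must verify that switching from ``$v_s<v_t$'' to ``$v_t<_r v_s$'' really does invert \emph{only} the secondary imaginary-part criterion and leaves the primary real-part criterion (and hence the set $\mathcal{P}^{\max}(v_s,v_t)$ itself) unchanged. Once that is laid out, both directions of the ``if and only if'' follow immediately and no further case analysis is needed.
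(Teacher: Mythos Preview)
Your proposal is correct and follows essentially the same approach as the paper: both arguments hinge on the observation that passing from $<$ to $<_r$ leaves the primary real-part selection (hence the set $\mathcal{P}^{\max}(v_s,v_t)$, resp.\ $\mathcal{P}^{\min}(v_s,v_t)$) unchanged while flipping the secondary imaginary-part tie-break from ``maximize'' to ``minimize''. The paper carries this out by writing $\varphi(v_sP_1v_t)=x+iy$ and tracking the conjugation explicitly, whereas you phrase it via the identities $\Re(\bar z)=\Re(z)$ and $\Ima(\bar z)=-\Ima(z)$; the content is identical.
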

\begin{proof}
	Let $ \Phi=(G, \varphi) $ be a $ \mathbb{T} $-gain graph with vertex set $ V(G)=\{ v_1,v_2, \cdots, v_n\} $. Let $ < $ be the standard vertex order on $ V(G) $. Suppose $ \Phi $ is not order independent, then  either  $D^{\max}_{<}(\Phi)\ne D^{\max}_{<_r}(\Phi) $ or $ D^{\min}_{<}(\Phi)\ne D^{\min}_{<_r}(\Phi)$ hold.
	Suppose  $D^{\max}_{<}(\Phi)\ne D^{\max}_{<_r}(\Phi) $. Then there exists $ v_s, v_t \in V(G) $ such that $ d_{\max}^{<}(v_s,v_t)\ne d_{\max}^{<_r}(v_s,v_t)$. Then $ \varphi_{\max}^{<}(v_s,v_t)\ne \varphi_{\max}^{<_r}(v_s,v_t)$. Let $ v_s<v_t $ and $\varphi_{\max}^{<}(v_s,v_t)=\varphi(v_sP_1v_t) =x+iy\in \mathbb{T}$, for some $ v_sP_1v_t \in \mathcal{P}^{\max}(v_s,v_t) $. It is clear that $y \neq 0$. Now $ v_t<_r v_s $ and $\varphi_{\max}^{<_r}(v_t,v_s)=\varphi(v_tP_2v_s)$, for some $ v_tP_2v_s \in \mathcal{P}^{\max}(v_s,v_t) $. Then $\varphi_{\max}^{<_r}(v_s,v_t)=\overline{\varphi_{\max}^{<_r}(v_t,v_s)}=\overline{\varphi(v_tP_2v_s)}=\varphi(v_sP_2v_t)$. Since  $ v_tP_2v_s \in \mathcal{P}^{\max}(v_s,v_t) $, so $ \varphi(v_sP_2v_t)=x-iy_1 \in \mathbb{T} $, where either $ y_1= y$ or $ y_1=-y $. Also, $ \varphi_{\max}^{<}(v_s,v_t)\ne \varphi_{\max}^{<_r}(v_s,v_t)$, so $ \varphi(v_sP_1v_t)\ne \varphi(v_sP_2v_t) $. Thus $ \varphi(v_sP_2v_t)=x-iy $ and $ y>0 $. Hence $ (i) $ holds.
	Similarly, $ D^{\min}_{<}(\Phi)\ne D^{\min}_{<_r}(\Phi)$ implies $ (ii)$.
	
	Conversely, suppose statement $ (i) $ holds. Then there exist two shortest $ \overrightarrow{v_sv_t} $-paths $ v_sP_1v_t $ and $ v_sP_2v_t $ in $ \mathcal{P}^{\max}(v_s,v_t) $ with different gains. If $ \varphi(v_sP_1v_t )=x+iy \in \mathbb{T}$, then  $ \varphi(v_sP_2v_t)=x-iy$, $ y\neq0 $. Without loss of generality, assume that $y>0$. If $ v_s<v_t $, then $ \varphi_{\max}^{<}(v_s,v_t)=x+iy $ and $ \varphi_{\max}^{<_r}(v_s,v_t)=\overline{\varphi_{\max}^{<_r}(v_t,v_s)}=\overline{x+iy}=x-iy$. Thus $ \varphi_{\max}^{<}(v_s,v_t)\ne\varphi_{\max}^{<_r}(v_s,v_t)$. Therefore $D^{\max}_{<}(\Phi)\ne D^{\max}_{<_r}(\Phi) $ and hence $ \Phi $ is not order independent. Similarly if the statement $ (ii) $ holds, then $ \Phi $ is not order independent.
\end{proof}

\begin{prop}\label{prop.3.1}
	Let $ \Phi=(G, \varphi) $ be a $ \mathbb{T} $-gain graph and $ `< `$ be the standard vertex order. Then $ D_{<}^{\max}(\Phi)= D_{<}^{\min}(\Phi)$ if and only if $ D^{\max}(\Phi)$ and $ D^{\min}(\Phi)$ are well defined and $D^{\max}(\Phi)= D^{\min}(\Phi)$.
\end{prop}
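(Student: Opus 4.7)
The backward direction is essentially free from the definitions: if $D^{\max}(\Phi)$ and $D^{\min}(\Phi)$ are well defined (i.e.\ $\Phi$ is order independent in both the maximum and minimum senses) and they coincide, then by definition of order independence $D_<^{\max}(\Phi)=D^{\max}(\Phi)$ and $D_<^{\min}(\Phi)=D^{\min}(\Phi)$, so $D_<^{\max}(\Phi)=D_<^{\min}(\Phi)$. I would dispose of this direction in one line.

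For the forward direction, assume $D_<^{\max}(\Phi)=D_<^{\min}(\Phi)$. The key step is to show that, for every pair of vertices $s,t$ with $d(s,t)\geq 1$, all shortest $st$-paths carry the same gain. Fix $s<t$. By the hypothesis, $\varphi^{<}_{\max}(s,t)=\varphi^{<}_{\min}(s,t)$. Writing $M=\max_{sPt\in\mathcal{P}(s,t)}\Re(\varphi(sPt))$ and $m=\min_{sPt\in\mathcal{P}(s,t)}\Re(\varphi(sPt))$, the real part of the left side is $M$ and of the right side is $m$, so $M=m$; hence every shortest path has real part of its gain equal to $M$, i.e.\ $\mathcal{P}^{\max}(s,t)=\mathcal{P}^{\min}(s,t)=\mathcal{P}(s,t)$. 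Applying the same equality to imaginary parts then forces every shortest path to have the same imaginary part too. Therefore all shortest $st$-paths share a common gain.

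With that in hand, I would finish as follows. Because all shortest $st$-paths have the same gain, neither condition (i) nor condition (ii) of the preceding characterization theorem can hold for any pair $(s,t)$. Hence $\Phi$ is order independent, so $D^{\max}(\Phi)$ and $D^{\min}(\Phi)$ are well defined. The chain
\begin{equation*}
D^{\max}(\Phi)=D_<^{\max}(\Phi)=D_<^{\min}(\Phi)=D^{\min}(\Phi)
\end{equation*}
then gives $D^{\max}(\Phi)=D^{\min}(\Phi)$, completing the argument.

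There is no real obstacle here; the only care needed is to split the equality of the unit complex numbers $\varphi^{<}_{\max}(s,t)$ and $\varphi^{<}_{\min}(s,t)$ into its real and imaginary parts in the correct order (real first, then imaginary), mirroring the lexicographic way in which these auxiliary gains were defined. The diagonal entries and the $d(s,t)=0$ case require no separate treatment.
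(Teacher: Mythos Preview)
Your proof is correct and follows essentially the same approach as the paper: from $\varphi^{<}_{\max}(s,t)=\varphi^{<}_{\min}(s,t)$ deduce that all shortest $st$-paths carry the same gain, conclude order independence, and finish with the chain of equalities. The paper's version is terser (it asserts ``Thus all the shortest paths from $s$ to $t$ have the same gain'' without spelling out the real/imaginary split, and argues order independence directly rather than via the preceding theorem), but the logic is the same.
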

\begin{proof}
	Suppose $ D_{<}^{\max}(\Phi)= D_{<}^{\min}(\Phi)$. Let $ s,t \in V(G)$  such that $ s<t $. Then $ d^{<}_{\max}(s,t)=d^{<}_{\min}(s,t) $. Therefore $ \varphi^{\max}_{<}(s,t)=\varphi^{\min}_{<}(s,t) $. Thus all the shortest paths from $ s $ to $ t $ have the same gain. Therefore, $\varphi^{\max}_{<}(s,t)=\varphi^{\max}_{<_r}(s,t)$ and $\varphi^{\min}_{<}(s,t)=\varphi^{\min}_{<_r}(s,t)$. Thus $ D^{\max}(\Phi) $ and $ D^{\min}(\Phi)  $ are well defined. Since $ d^{<}_{\max}(s,t)=d^{<}_{\min}(s,t) $, so $ D^{\max}(\Phi)=D^{\min}(\Phi)$.
	The converse is easy to verify.
\end{proof}

\begin{theorem}\label{cor3.1}
	Let $ \Phi=(G, \varphi) $ be a $ \mathbb{T} $-gain graph and $ `< `$ be the standard vertex order. Let $  <_a $ be any vertex order on $V(G)$. Then $ D^{\max}_{<}(\Phi)=D^{\min}_{<}(\Phi) $ if and only if $ D^{\max}_{<_a}(\Phi)=D^{\min}_{<_a}(\Phi) $.
\end{theorem}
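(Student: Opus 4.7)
The plan is to route the equivalence through a condition that does not see the ordering at all. Concretely, I claim that for \emph{any} vertex order $<_b$ on $V(G)$, one has $D^{\max}_{<_b}(\Phi) = D^{\min}_{<_b}(\Phi)$ if and only if the following holds:
\begin{center}
$(\ast)$\quad for every pair of vertices $s, t \in V(G)$, all shortest $st$-paths in $G$ share a common gain.
\end{center}
Since $(\ast)$ makes no reference to a vertex ordering, applying this characterisation first with $<_b$ equal to the standard order $<$ and then with $<_b$ equal to $<_a$ immediately yields the theorem.

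For the forward implication of the characterisation, fix any order $<_b$ and any pair $s, t \in V(G)$ with $s <_b t$, and equate the $(s,t)$-entries of $D^{\max}_{<_b}(\Phi)$ and $D^{\min}_{<_b}(\Phi)$ to obtain $\varphi^{<_b}_{\max}(s,t) = \varphi^{<_b}_{\min}(s,t)$. Taking real parts and using the definitions of $\mathcal{P}^{\max}(s,t)$ and $\mathcal{P}^{\min}(s,t)$ forces
\[
\max_{sPt \in \mathcal{P}(s,t)} \Rea(\varphi(sPt)) \;=\; \min_{sPt \in \mathcal{P}(s,t)} \Rea(\varphi(sPt)),
\]
so every shortest $st$-path has the same real part of its gain, and hence $\mathcal{P}^{\max}(s,t) = \mathcal{P}^{\min}(s,t) = \mathcal{P}(s,t)$. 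The imaginary-part tiebreaker built into the definitions of $\varphi^{<_b}_{\max}$ and $\varphi^{<_b}_{\min}$ then forces every path in this common set to share a single imaginary part as well, establishing $(\ast)$. This is essentially the argument used in the proof of Proposition \ref{prop.3.1}, just carried out verbatim for $<_b$ instead of $<$.

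The converse implication is straightforward: if $(\ast)$ holds, let $\gamma$ denote the common gain of all shortest $st$-paths for a given unordered pair $\{s,t\}$; then for $s<_b t$ both $\varphi^{<_b}_{\max}(s,t)$ and $\varphi^{<_b}_{\min}(s,t)$ must equal $\gamma$, while the Hermitian convention gives $\varphi^{<_b}_{\max}(t,s) = \overline{\gamma} = \varphi^{<_b}_{\min}(t,s)$; multiplying through by $d(s,t)$ yields $D^{\max}_{<_b}(\Phi) = D^{\min}_{<_b}(\Phi)$. The only subtle point of the whole argument is the passage from a single complex-number equality $\varphi^{<_b}_{\max}(s,t) = \varphi^{<_b}_{\min}(s,t)$ to the global conclusion that \emph{all} shortest $st$-paths have the same gain; this is precisely where the lexicographic (real part, then imaginary part) structure of the auxiliary gains is genuinely needed. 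Once $(\ast)$ is in hand, the rest is a substitution.
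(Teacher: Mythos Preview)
Your proposal is correct and follows essentially the same route as the paper: both arguments pass through the order-free condition that all shortest $st$-paths have a common gain, and then observe that this condition is equivalent to $D^{\max}_{<_b}(\Phi)=D^{\min}_{<_b}(\Phi)$ for any fixed order $<_b$. Your write-up is somewhat more explicit than the paper's in spelling out the lexicographic (real part, then imaginary part) reasoning that turns the single equality $\varphi^{<_b}_{\max}(s,t)=\varphi^{<_b}_{\min}(s,t)$ into the statement that every shortest path has the same gain, a step the paper asserts in one line.
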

\begin{proof}
	Let $ < $ be the standard vertex order, and $ D^{\max}_{<}(\Phi)=D^{\min}_{<}(\Phi) $. Let $ v_i, v_j \in V(G)$. Then $ d^{<}_{\max}(v_i,v_j)=d^{<}_{\min}(v_i,v_j) $ and  $ \varphi^{<}_{\max}(v_i,v_j)=\varphi^{<}_{\min}(v_i,v_j)$. Thus all the shortest paths from $ v_i $  to $ v_j $ have the same gain. Therefore, for any arbitrary vertex ordering $ <_a $, we have $ \varphi^{<_a}_{\max}(v_i,v_j)=\varphi^{<_a}_{\min}(v_i,v_j)$. Hence $ d^{<_a}_{\max}(v_i,v_j)=d^{<_a}_{\min}(v_i,v_j) $. Since $ v_i $ and $ v_j $ are arbitrary, so $ D^{\max}_{<_a}(\Phi)=D^{\min}_{<_a}(\Phi) $.
	
Proof of the converse is similar to that of the previous part.
\end{proof}
\begin{definition}[Distance compatible]{\rm
		A $ \mathbb{T} $-gain graph $ \Phi=(G, \varphi) $ is called \emph{gain distance compatible (simply, distance compatible)} if $ D^{\max}_{<}(\Phi)=D^{\min}_{<}(\Phi) $, where $  < $ is the standard order. In this case, we define $ D(\Phi)=D^{\max}_{<}(\Phi)=D^{\min}_{<}(\Phi)$. }
\end{definition}
The proof of the following theorem is easy to verify.
\begin{theorem}\label{dist-comp-well}
	For a  $ \mathbb{T} $-gain graph $ \Phi =(G, \varphi)$, the following are equivalent:
	\begin{enumerate}
		\item[(1)]  $ \Phi $ is distance compatible.
		\item [(2)] $ D(\Phi)=D^{\max}_{<}(\Phi)= D^{\min}_{<}(\Phi)=D^{\max}(\Phi)=D^{\min}(\Phi)$.
		\item [(3)]  $ D(\Phi) $ is well defined.
	\end{enumerate}
	
\end{theorem}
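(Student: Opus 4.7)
The plan is to establish the cycle of implications $(1) \Rightarrow (2) \Rightarrow (3) \Rightarrow (1)$, relying almost entirely on the definition of distance compatibility together with Proposition~\ref{prop.3.1}, which was already proved above.

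The equivalence $(1) \iff (3)$ is essentially definitional. By the definition of gain distance compatibility, the symbol $D(\Phi)$ is introduced \emph{only} when $D^{\max}_{<}(\Phi) = D^{\min}_{<}(\Phi)$ for the standard order $<$, and it is declared to equal their common value. So ``$D(\Phi)$ is well defined'' is, by construction, the same assertion as ``$\Phi$ is distance compatible.'' I would spell this out in one sentence in each direction, being careful to separate the definiendum $D(\Phi)$ from the pair $D^{\max}_{<}(\Phi), D^{\min}_{<}(\Phi)$, so that the argument is not perceived as circular.

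For $(1) \Rightarrow (2)$, assume $D^{\max}_{<}(\Phi) = D^{\min}_{<}(\Phi)$. Proposition~\ref{prop.3.1} then yields two conclusions at once: (a) $D^{\max}(\Phi)$ and $D^{\min}(\Phi)$ are well defined, meaning the maximum and minimum auxiliary gains are insensitive to reversing the vertex order, and (b) $D^{\max}(\Phi) = D^{\min}(\Phi)$. Combining this with the definition $D(\Phi) := D^{\max}_{<}(\Phi) = D^{\min}_{<}(\Phi)$ gives the full chain of equalities $D(\Phi) = D^{\max}_{<}(\Phi) = D^{\min}_{<}(\Phi) = D^{\max}(\Phi) = D^{\min}(\Phi)$, which is precisely $(2)$. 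The implication $(2) \Rightarrow (3)$ is immediate: if all five matrices coincide, then $D(\Phi)$ is certainly well defined.

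There is no real obstacle; the result is a consolidation lemma recording that the three equivalent formulations of distance compatibility collapse to a single notion, and its proof is carried by Proposition~\ref{prop.3.1}. The only point I would be careful with is the order of arguments in $(3) \Rightarrow (1)$, to make it clear that ``well-definedness of $D(\Phi)$'' unpacks, by the preceding definition, to the defining equality $D^{\max}_{<}(\Phi) = D^{\min}_{<}(\Phi)$ and not to any stronger property.
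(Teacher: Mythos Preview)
Your proposal is correct and follows essentially the same route as the paper: the cycle $(1)\Rightarrow(2)\Rightarrow(3)\Rightarrow(1)$, with Proposition~\ref{prop.3.1} carrying the only nontrivial step $(1)\Rightarrow(2)$ and the remaining implications being definitional. If anything, your unpacking of $(3)\Rightarrow(1)$ is slightly more careful than the paper's, which writes $D(\Phi)=D^{\max}(\Phi)=D^{\min}(\Phi)$ rather than the subscripted versions, but the content is identical.
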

\begin{proof}
	$ (1) \implies (2):$ Let $ \Phi $ be distance compatible. Then, by the definition, $ D^{\max}_{<}(\Phi)=D^{\min}_{<}(\Phi) $ for standard vertex order $ < $. Also $ D(\Phi)= D^{\max}_{<}(\Phi)=D^{\min}_{<}(\Phi) $. Now by Proposition \ref{prop.3.1}, $ D^{\max}(\Phi)=D^{\min}(\Phi) $. Hence  $ D^{\max}(\Phi)=D^{\min}_{<}(\Phi)$ and $ D^{\min}(\Phi)=D^{\min}_{<}(\Phi)$. \\
	$ (2)\implies(3):$ By the definition of distance-compatible $ \mathbb{T} $-gain graph, $ D(\Phi) $ exists.\\
	$(3)\implies(1):$ If $ D(\Phi) $ is well defined, then $ D(\Phi)=D^{\max}(\Phi)=D^{\min}(\Phi) $. Hence $ \Phi $ is distance-compatible.
	
\end{proof}

\begin{prop}\label{prop3.2}
	Let $ \Phi=(G, \varphi) $ be any distance compatible $ \mathbb{T} $-gain graph. If $ \Phi \sim \Psi $, then $ \Psi $ is distance compatible and $ \spec(D(\Phi))=\spec(D(\Psi))$.
\end{prop}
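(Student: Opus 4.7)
The plan is to show that switching equivalence preserves the property ``all shortest $s$--$t$ paths have the same gain'' and then realize $D(\Phi)$ and $D(\Psi)$ as diagonally unitarily similar. First I would establish the path-level consequence of edge-level switching. Iterating the defining relation $\varphi_\Psi(\overrightarrow{e}_{i,j}) = \zeta(v_i)^{-1}\varphi_\Phi(\overrightarrow{e}_{i,j})\zeta(v_j)$ along an oriented path $sPt$ with successive vertices $s=u_0,u_1,\dots,u_k=t$ telescopes, because consecutive factors $\zeta(u_j)$ and $\zeta(u_j)^{-1}$ cancel, yielding
\[
\varphi_\Psi(sPt)=\zeta(s)^{-1}\zeta(t)\,\varphi_\Phi(sPt).
\]
The crucial feature is that the multiplicative factor $\zeta(s)^{-1}\zeta(t)$ depends only on the endpoints, not on the chosen path.

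Next I would transfer distance compatibility. By Theorem \ref{dist-comp-well} (or directly Proposition \ref{prop.3.1}), $\Phi$ being distance compatible is equivalent to saying that, for every pair $s,t\in V(G)$, all shortest $sPt$-paths share a common $\varphi_\Phi$-gain $\alpha_{st}$. The path identity above then forces every shortest $sPt$-path to have $\varphi_\Psi$-gain $\zeta(s)^{-1}\zeta(t)\alpha_{st}$, a value independent of the particular shortest path. Consequently $D_<^{\max}(\Psi)=D_<^{\min}(\Psi)$, so $\Psi$ is distance compatible and $D(\Psi)$ is well defined.

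Finally I would read off the matrix relation. For the standard order $<$, every entry satisfies $D(\Psi)_{ij}=\zeta(v_i)^{-1}\zeta(v_j)\,D(\Phi)_{ij}$, which in matrix form is $D(\Psi)=Z^{-1}D(\Phi)Z$ with $Z=\operatorname{diag}(\zeta(v_1),\dots,\zeta(v_n))$. Since $Z$ is unitary (its diagonal entries lie in $\mathbb{T}$), $D(\Phi)$ and $D(\Psi)$ are unitarily similar and hence cospectral, giving $\spec(D(\Phi))=\spec(D(\Psi))$.

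I do not anticipate any real obstacle: the argument is essentially the standard ``gain-of-a-path telescopes under switching'' computation, together with the observation that distance compatibility removes any ambiguity in selecting the auxiliary gains used to build the gain distance matrices. The only care needed is to note that the resolution of ties in the definition of $\varphi_{\max}^{<}$ and $\varphi_{\min}^{<}$ becomes vacuous under distance compatibility, because all shortest paths give the same gain on both $\Phi$- and $\Psi$-sides after the constant twist by $\zeta(s)^{-1}\zeta(t)$.
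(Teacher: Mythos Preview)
Your proposal is correct and follows essentially the same route as the paper's own proof: both use the switching function to obtain $\varphi_\Psi(sPt)=\zeta(s)^{-1}\varphi_\Phi(sPt)\zeta(t)$, deduce that all shortest $st$-paths in $\Psi$ share a common gain (hence $\Psi$ is distance compatible), and then conclude $D(\Psi)=Z^{-1}D(\Phi)Z$ for the diagonal unitary $Z=\operatorname{diag}(\zeta(v_1),\dots,\zeta(v_n))$. Your write-up is simply a bit more explicit about the telescoping and the irrelevance of the tie-breaking rules under compatibility.
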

\begin{proof}
	Let $ \Phi=(G, \varphi) $ be a $ \mathbb{T} $-gain graph with the standard  vertex order $  < $. Let $ s,t\in V(G) $. Since $ \Phi $ is distance compatible,  all oriented shortest paths $ sPt$ from $ s $  to $ t $ have the same gain. As $ \Phi \sim \Psi $, so   there exists a switching function $ \zeta $ such that $ \psi(sPt)=\zeta(s)^{-1}\varphi(sPt) \zeta(t)$, for any shortest path $ sPt$. For any shortest path $ sPt $, $ \varphi(sPt) $ is unique, so   $ \psi(sPt) $ is unique. Thus $ \psi_{\max}^{<}(s,t)=\psi_{\min}^{<}(s,t) $  and hence $ D^{\max}_{<}(\Psi)=D^{\min}_{<}(\Psi) $. That is, $ \Psi $ is distance compatible and $ D(\Psi) $ is well defined. Let $ d_{\psi}(s,t) $ and $ d_{\varphi}(s,t) $ be the unique gain distance from $ s $ to $ t $ in $ \Psi $ and $ \Phi $, respectively. Then $ d_{\varphi}(s,t) =\zeta(s)^{-1} d_{\psi}(s,t)\zeta(t)$. Thus $ D(\Phi) $ and $ D(\Psi) $ are similar and hence $ \spec(D(\Phi))=\spec(D(\Psi))$.
\end{proof}

Converse of the above statement holds for balanced $ \mathbb{T} $ gain graph, see Corollary \ref{cor5.1}.

\begin{prop}\label{prop3.3}
	Let $ \Phi=(G, \varphi) $ be a $ \mathbb{T} $-gain graph. Then
	\begin{enumerate}
		\item[(1)] If $ \Phi $ is a signed graph, then $ \Phi $ is order-independent.
		\item[(2)] If $ \Phi $ is balanced or anti-balanced, then $ \Phi $ is order-independent.
		\item[(3)] If $ \Phi $ is distance compatible, then $ \Phi $ is order-independent.
		\item[(4)] If $ \Phi $ is geodetic, then $ \Phi $ is order-independent.
	\end{enumerate}
\end{prop}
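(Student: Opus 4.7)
The plan is to exploit the characterization of non-order-independence from the previous theorem: $\Phi$ fails to be order-independent precisely when there exist $v_s,v_t\in V(G)$ such that two shortest paths in $\mathcal{P}^{\max}(v_s,v_t)$ or in $\mathcal{P}^{\min}(v_s,v_t)$ carry different gains. So each of the four items reduces to showing that in every $\mathcal{P}^{\max}(v_s,v_t)$ (and every $\mathcal{P}^{\min}(v_s,v_t)$) the gain is uniquely determined.

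Two of the four cases are essentially immediate. For (4), the geodetic case, $\mathcal{P}(v_s,v_t)$ is already a singleton by definition, so $\mathcal{P}^{\max}$ and $\mathcal{P}^{\min}$ each contain just that one path and nothing can go wrong. For (1), in a signed graph every path-gain lies in $\{\pm 1\}$, which coincides with its real part; hence if two paths achieve the maximum (respectively, minimum) real part over $\mathcal{P}(v_s,v_t)$, their gains agree.

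For (3), I would invoke Proposition \ref{prop.3.1}: the hypothesis $D^{\max}_{<}(\Phi)=D^{\min}_{<}(\Phi)$ forces $\varphi^{<}_{\max}(v_i,v_j)=\varphi^{<}_{\min}(v_i,v_j)$ for every ordered pair, which by the analysis used in that proposition is equivalent to saying that all shortest paths between any two vertices have a common gain. Once uniqueness of the shortest-path gain is in hand, $\mathcal{P}^{\max}(v_s,v_t)$ and $\mathcal{P}^{\min}(v_s,v_t)$ trivially consist of paths sharing one value, and order-independence follows. For (2), I would treat balanced first: if $\Phi$ is balanced then every cycle has gain $1$, so for any two paths $P_1,P_2$ from $v_s$ to $v_t$ the closed walk $P_1\cdot P_2^{-1}$ (after reducing to cycles, or equivalently by switching to the trivial gain) has gain $1$, yielding $\varphi(P_1)=\varphi(P_2)$; the argument of (3) then applies. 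For the anti-balanced case, $-\Phi$ is balanced, so any two $v_sv_t$-paths in $-\Phi$ have the same gain; passing from $-\Phi$ to $\Phi$ multiplies the gain of each path of length $k$ by $(-1)^k$, and since all shortest paths share the common length $d(v_s,v_t)$, they remain gain-coincident in $\Phi$.

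The only delicate point is the closed-walk argument in the balanced case, since a closed walk need not be a simple cycle. I would resolve this by appealing to the standard fact that a balanced $\mathbb{T}$-gain graph is switching equivalent to the all-ones gain, which renders every walk-gain equal to $1$ trivially; alternatively, a short induction on the number of edges of the closed walk, peeling off a cycle at each step, works directly. Beyond this technicality the proof is a uniform reduction to the previous characterization of order-independence.
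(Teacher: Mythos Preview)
The paper states Proposition~\ref{prop3.3} without proof, so there is no argument in the paper to compare against; the authors evidently regard all four items as routine consequences of the preceding material. Your proof is correct and supplies exactly the kind of justification the paper omits.

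A couple of remarks on how your argument lines up with the paper's infrastructure. For (3), you could shortcut slightly: Theorem~\ref{dist-comp-well} (stated just before this proposition) already asserts that distance compatibility is equivalent to $D^{\max}(\Phi)$ and $D^{\min}(\Phi)$ being well defined, which is precisely order-independence. For (2), your path-uniqueness claim in the balanced case is what the paper later records as Lemma~\ref{lm3.1}; since that lemma's proof does not invoke Proposition~\ref{prop3.3}, there is no circularity, and your handling of the closed-walk technicality via switching equivalence to the trivial gain is the cleanest route. Your treatment of the anti-balanced case---reducing to the balanced case for $-\Phi$ and noting that the factor $(-1)^{d(v_s,v_t)}$ is constant across all shortest $v_sv_t$-paths---is correct and is the natural argument. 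Items (1) and (4) are indeed immediate for the reasons you give.
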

However, converse of  the above statements need not be true in general, see Example \ref{ex1}. The following result  is an extension of the Harary's path criterion for  $ \mathbb{T} $-gain graphs.
\begin{lemma}\label{lm3.1}
	Let $ \Phi=(G, \varphi) $ be any $ \mathbb{T} $-gain graph. Then $ \Phi $ is balanced if and only if every directed $ (s,t) $-path have the same gain in $ \Phi $, for any two vertices $ s, t $.	
\end{lemma}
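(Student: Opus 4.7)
The plan is to prove both implications separately. For the forward direction, I would leverage the fact that a connected balanced $\mathbb{T}$-gain graph must be switching equivalent to the trivial gain graph $(G,\mathbf{1})$. Since $\Phi$ is balanced, every cycle has gain $1$, and in particular the fundamental cycles with respect to any normal spanning tree $T$ of $G$ do; the trivial gain graph $(G,\mathbf{1})$ trivially has the same fundamental cycle gains. Lemma \ref{lm2.1} then yields a switching function $\zeta:V(G)\to\mathbb{T}$ such that $\varphi(\overrightarrow{e}_{i,j})=\zeta(v_i)^{-1}\zeta(v_j)$ for every edge. A short telescoping computation over any oriented path from $s$ to $t$ then shows that its gain equals $\zeta(s)^{-1}\zeta(t)$, which depends only on the endpoints $s$ and $t$, not on the chosen path.

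For the converse, I would pick an arbitrary cycle $C:v_0,v_1,\ldots,v_{k-1},v_0$ of $G$ (with $k\geq 3$, since $G$ is simple) and isolate two distinct oriented $(v_0,v_1)$-paths: the single edge $v_0\to v_1$, call it $P_1$, and the detour $P_2:v_0\to v_{k-1}\to\cdots\to v_1$ around the cycle. By the hypothesis that every $(v_0,v_1)$-path has the same gain, $\varphi(P_1)=\varphi(P_2)$. Since the reverse of $P_2$, traversed from $v_1$ back to $v_0$, has gain $\overline{\varphi(P_2)}$, multiplying gives $\varphi(\overrightarrow{C})=\varphi(P_1)\overline{\varphi(P_2)}=\varphi(P_1)\overline{\varphi(P_1)}=1$. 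As $C$ was arbitrary, $\Phi$ is balanced.

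The only delicate point is the balanced-implies-switch-to-trivial step in the forward direction; this is a direct consequence of Lemma \ref{lm2.1} once one observes that both $\Phi$ and $(G,\mathbf{1})$ assign gain $1$ to every fundamental cycle. No other serious obstacle is expected, as the converse reduces to a single-cycle calculation that is automatic once the two paths along the cycle are identified.
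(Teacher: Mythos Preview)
Your proof is correct. The converse you write out is exactly what the paper leaves as ``easy to verify''. The forward direction, however, takes a genuinely different route. The paper argues by contradiction: if two oriented $(s,t)$-paths $sP_1t$ and $sP_2t$ had different gains, then the closed walk $sP_1t\cdot tP_2s$ would have gain $\varphi(sP_1t)\varphi(tP_2s)\ne 1$; this walk decomposes into cycles of $G$, so at least one of those cycles is non-neutral, contradicting balance. You instead invoke Lemma~\ref{lm2.1} to produce a switching function $\zeta$ with $\varphi(\overrightarrow{e}_{i,j})=\zeta(v_i)^{-1}\zeta(v_j)$ and then telescope along any path to get gain $\zeta(s)^{-1}\zeta(t)$. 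The paper's route is a bit more self-contained (no appeal to the switching characterization), while yours sidesteps any discussion of how two possibly overlapping paths combine into cycles; both are standard and short.
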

\begin{proof}
	Let $ \Phi=(G, \varphi) $ be balanced. Suppose that the two oriented paths $ sP_1t$ and $ sP_2t $ have different gains. That is $ \varphi(sP_1t) \ne \varphi(sP_2t)$. Then $ \varphi(sP_1t) \varphi(tP_2s)=\sum\limits_{j=1}^{k}\varphi(\overrightarrow{C_j})\ne 1$, where $ C_1, C_2, \dots, C_k $ are the cycles formed by these two paths. Therefore, there exist at least one cycle, say $C_j$ such that $ \varphi(\overrightarrow{C_j}) \ne 1$ . Thus $ \Phi $ is not balanced, a contradiction. Converse is easy to verify.
\end{proof}

Let $ \Phi=(G, \varphi) $ be any $ \mathbb{T} $-gain graph. Then $ \Phi $ is either order-independent or order-dependent. If $ \Phi $ is ordered-independent, then $ \Phi $  may or may not be balanced, anti-balanced, geodetic. If $ \Phi $ is ordered-dependent, then, by Proposition \ref{prop3.3}, $ \Phi $ is neither balanced nor anti-balanced nor geodetic. Therefore, any $ \mathbb{T} $-gain graph $ \Phi=(G, \varphi) $ belongs to one of the following classes:
\begin{enumerate}
	\item [(A)] $ \Phi $ is balanced or anti-balanced or geodetic and $ D^{\max}(\Phi)=D^{\min}(\Phi) $.
	\item [(B)] $ \Phi $ is neither  balanced nor anti-balanced nor geodetic and $ D^{\max}(\Phi)=D^{\min}(\Phi) $.
	\item [(C)]$ \Phi $ is neither balanced nor anti-balanced nor geodetic and $ D^{\max}(\Phi)\ne D^{\min}(\Phi) $.
	\item [(D)] $ \Phi $ is neither balanced nor anti-balanced nor geodetic and at least one of $ D^{\max}(\Phi)$ and $D^{\min}(\Phi) $ are not well defined.
\end{enumerate}

Next we give some examples. Examples of  Type $ (A) $ can be constructed easily. Example \ref{ex3.1} is of Type $ (D) $.  Examples of Type $(B)$ and Type $(C)$ are given below.
\begin{figure} [!htb]
	\begin{center}
		\includegraphics[scale= 0.55]{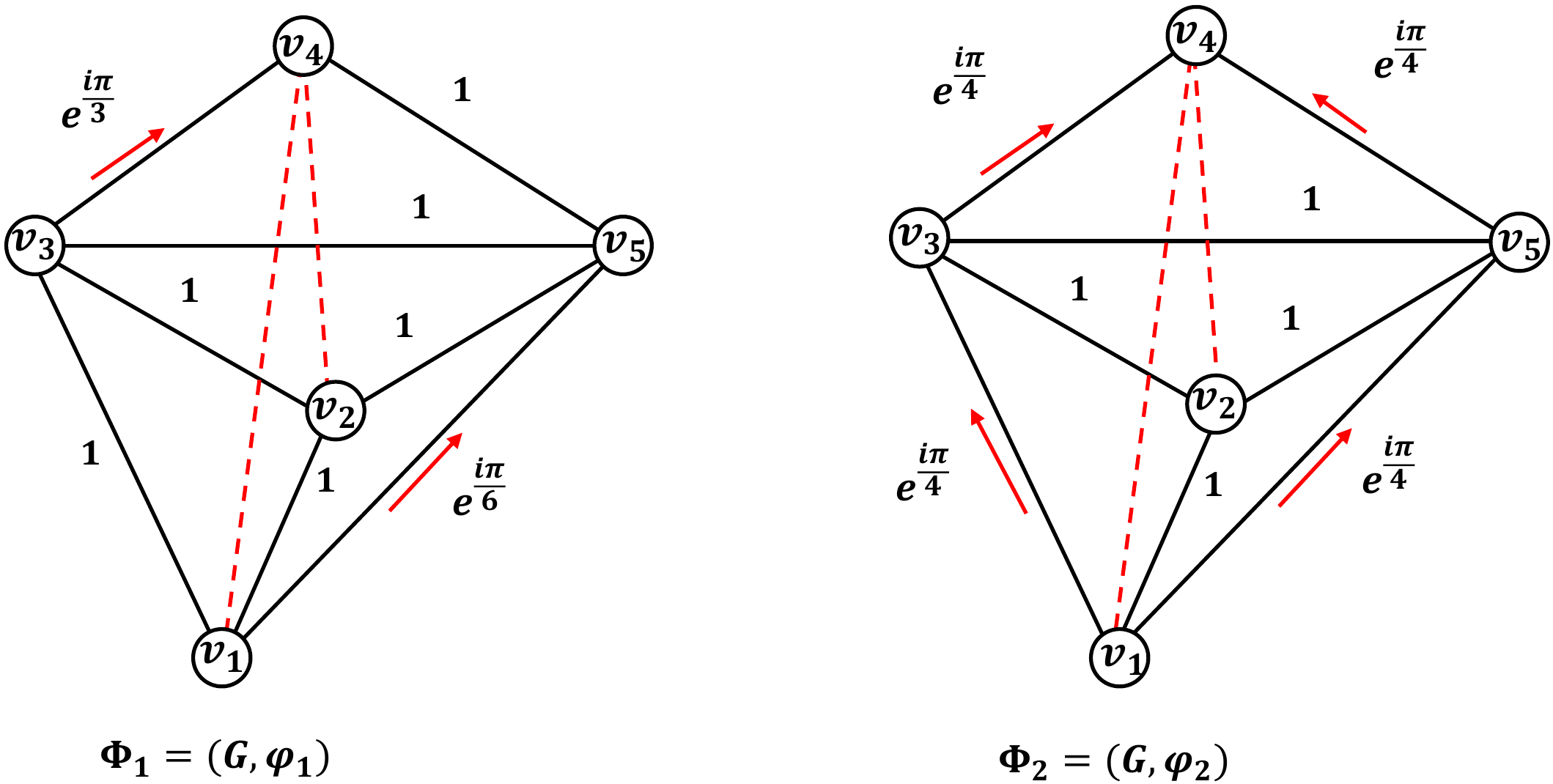}
		\caption{ $ \mathbb{T} $-gain graphs $ \Phi_1 $ and $ \Phi_2 $} \label{fig1.1}
	\end{center}
\end{figure}

\begin{example}\label{ex1}{\rm
	Let us consider the $ \mathbb{T} $-gain graph $ \Phi_1=(G, \varphi_1) $ (in Figure \ref{fig1.1}) with the standard vertex order. The graph $ \Phi_1 $ is neither balanced nor anti-balanced nor geodetic. Also $ \Phi $ is order-independent. Now
	\begin{equation*}
		D^{\max}(\Phi_1)=\left[\begin{array}{ccccc}
			0 & 1 & 1 & 2e^{\frac{i\pi}{6}} & e^{\frac{i\pi}{6}}\\
			1 & 0 & 1 & 2 & 1\\
			1 & 1 & 0 & e^{\frac{i\pi}{3}} & 1\\
			2e^{-\frac{i\pi}{6}} & 2 & e^{-\frac{i\pi}{3}} & 0 & 1\\
			e^{-\frac{i\pi}{6}} & 1 & 1 & 1 & 0
		\end{array} \right],
		D^{\min}(\Phi_1)=\left[ \begin{array}{ccccc}
			0 & 1 & 1 & 2e^{\frac{i\pi}{3}} & e^{\frac{i\pi}{6}}\\
			1 & 0 & 1 & 2e^{\frac{i\pi}{3}} & 1\\
			1 & 1 & 0 & e^{\frac{i\pi}{3}} & 1\\
			2e^{-\frac{i\pi}{3}} & 2e^{-\frac{i\pi}{3}} & e^{-\frac{i\pi}{3}} & 0 & 1\\
			e^{-\frac{i\pi}{6}} & 1 & 1 & 1 & 0
		\end{array} \right].
	\end{equation*}
	Thus $ D^{\max}(\Phi_1)\ne D^{\min}(\Phi_1) $. Therefore, by Theorem \ref{dist-comp-well}, $ \Phi_1$ is distance incompatible.}
\end{example}
\begin{example}\label{eq3.3}{\rm
	The $ \mathbb{T} $-gain graph $ \Phi_2 $ (in Figure \ref{fig1.1}) with the standard vertex ordering is neither balanced nor anti-balanced nor geodetic. Here $ \Phi_2$ is order-independent. However, it is distance compatible and
	\begin{equation*}
		D^{\max}(\Phi_2)=D^{\min}(\Phi_2)=	\left[\begin{array}{ccccc}
			0 & 1 & e^{\frac{i\pi}{4}} & 2e^{\frac{i\pi}{2}} & e^{\frac{i\pi}{4}}\\
			1 & 0 & 1 & 2e^{\frac{i\pi}{4}} & 1\\
			e^{-\frac{i\pi}{4}} & 1 & 0 & e^{\frac{i\pi}{4}} & 1\\
			2e^{-\frac{i\pi}{2}} & 2e^{-\frac{i\pi}{4}} & e^{-\frac{i\pi}{4}} & 0 & e^{-\frac{i\pi}{4}}\\
			e^{-\frac{i\pi}{4}} & 1 & 1 & e^{\frac{i\pi}{4}} & 0
		\end{array} \right].
	\end{equation*}}
\end{example}

\section{Positively weighted $ \mathbb{T} $-gain graph}\label{Positively_weighted}
In this section, we introduce the notion of a positively weighted $ \mathbb{T} $-gain graph. The adjacency matrices of positively weighted $ \mathbb{T} $-gain graphs generalize the following notions: $ \mathbb{T} $-gain adjacency matrices, Hermitian adjacency matrices of mixed graphs, adjacency matrices of signed graphs, adjacency matrices of undirected graphs.
\begin{definition}[Positively weighted $ \mathbb{T} $-gain graph]{\rm
	Let $ \Phi=(G, \varphi) $ be any $ \mathbb{T} $-gain graph with $ E(G) $ be the undirected edge set of $ G $. Let $ w:E(G)\rightarrow \mathbb{R}^{+} $ be a  weight function on the edges of $ G $. The positively weighted $ \mathbb{T} $-gain graph associated with $ \Phi $ and $ w $ is the graph $G $ together with the weighted gain function $ \varphi_w $ defined as follows:
	\begin{center}
		$ \varphi_w(\overrightarrow{e}_{i,j})=\varphi(\overrightarrow{e}_{i,j})w(e_{i,j}). $
	\end{center}
	A positively weighted $ \mathbb{T} $-gain graph on $(G, \varphi)$ is denoted by $ (G, \varphi, w) $(or simply $ \Phi_{w} $).}
	
\end{definition}
The adjacency matrix associated with $ \Phi_{w} $,  denoted by $ A(\Phi_{w}) $,  is  an $ n\times n $ Hermitian matrix whose $ (i,j)th $ entry is $\varphi_w(\overrightarrow{e}_{i,j})$ if $e_{i,j} \in E(G)$, and zero otherwise. Since $ A(\Phi_w) $ is Hermitian, so all its eigenvalues are real.  The spectrum of $ A(\Phi_{w}) $ is the spectrum of $ \Phi_{w} $.

If $ \varphi=1 $, then the corresponding positively weighted $ \mathbb{T} $-gain graph is the weighted graph $ (G, w) $,  and is denoted by $ G_{w} $. The adjacency matrix of $ G_w $, denoted by $ A(G_w) $,  is an  $n\times n $ symmetric matrix with the $ (i,j) $th entry  $ w(e_{i,j}) $. Then $ A(\Phi_{w})=A(\Phi) \circ A(G_w) $, where $ '\circ' $ is the Hadamard product.

We establish an expression for the characteristic polynomial of $ A(\Phi_w) $, which is a generalization of the weighted Sachs formula. Let $ \Phi_{w}=(G, \varphi, w) $ be a positively weighted $ \mathbb{T} $-gain graph. The weight of a cycle $ C $ in $ G_w $ is defined as $ w(C)=\prod\limits_{e\in E(C)}w(e) $. Now $ \overrightarrow{C} $ is an oriented cycle. Then the weighted $ \mathbb{T} $-gain of $ \overrightarrow{C} $ is $ \varphi_w(\overrightarrow{C})=\prod\limits_{\overrightarrow{e}\in \overrightarrow{E(C)}}\varphi_w(\overrightarrow{e})=\prod\limits_{\overrightarrow{e}\in \overrightarrow{E(C)}}\varphi(\overrightarrow{e})w(e)=w(C)\varphi(\overrightarrow{C})$. An elementary subgraph $ H $ of $ G $ is a subgraph of $ G $ such that each component of $H$ is either a cycle or an edge of $ G $.
For an elementary subgraph $ H$, $ H_{e} $ denotes the set of isolated edges in $ H $. The collection of all elementary subgraphs with $ k $ vertices is denoted by $ \mathcal{H}_{k} $.   Next, we state the weighted gain Sachs formula. As the proof is similar to that of the weighted case, so we skip it.

\begin{theorem}[{Weighted gain Sachs formula}]\label{Th3.1}
	Let $ \Phi_w=(G, \varphi, w) $ be a positively weighted $ \mathbb{T} $-gain graph with characteristic polynomial $ \chi(\Phi_{w};x)=x^{n}+a_1x^{n-1}+\dots+a_n $. Then
	\begin{equation}
		a_i=\sum\limits_{H\in \mathcal{H}_i}(-1)^{p(H)}2^{c(H)}w(H_e) w(H)\prod\limits_{C\in H}\Rea(\varphi(C)),
	\end{equation}
	where $ c(H), p(H)$ and $ C $ denote the number of cycles, the number of components and cycle in $ H $ , respectively.
\end{theorem}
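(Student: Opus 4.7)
The plan is to obtain the expression for $a_i$ by expanding the characteristic polynomial as a sum of principal minors of $A(\Phi_w)$ and then applying the permutation definition of the determinant, exactly as in the classical Sachs / Harary setup, but keeping track of the weights $w(e)$ and of the fact that entries of $A(\Phi_w)$ are unit complex numbers times positive weights rather than $\pm 1$.

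First I would start from the standard identity
\begin{equation*}
\chi(\Phi_w;x)=\det(xI-A(\Phi_w))= \sum_{i=0}^{n}(-1)^{i}\Bigl(\sum_{S\subseteq V(G),\,|S|=i}\det A(\Phi_w)[S]\Bigr)x^{n-i},
\end{equation*}
so that $a_i=(-1)^{i}\sum_{|S|=i}\det A(\Phi_w)[S]$. For a fixed vertex subset $S$ with $|S|=i$, expand
\begin{equation*}
\det A(\Phi_w)[S]=\sum_{\sigma\in\mathrm{Sym}(S)}\mathrm{sgn}(\sigma)\prod_{v\in S}A(\Phi_w)_{v,\sigma(v)}.
\end{equation*}
Because $A(\Phi_w)$ has zero diagonal, only derangements $\sigma$ contribute, and the cycle decomposition of such a $\sigma$ must use only edges of $G$. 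Thus each contributing $\sigma$ corresponds to an elementary subgraph $H$ on the vertex set $S$: every transposition of $\sigma$ is an edge of $H_e$, and every $\sigma$-cycle of length $\geq 3$ is an oriented cycle component $\vec{C}$ of $H$. Conversely, each $H\in\mathcal{H}_i$ arises from exactly $2^{c(H)}$ permutations, since each of its $c(H)$ cycle components can be oriented in two ways while each isolated edge gives a unique transposition.

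Next I would evaluate the two factors for each $H$. The sign of $\sigma$ is the product of signs of its cycles; a $k$-cycle has sign $(-1)^{k-1}$, and the cycle lengths of $\sigma$ sum to $i$ with total number of cycles equal to $p(H)$, so $\mathrm{sgn}(\sigma)=(-1)^{i-p(H)}$. For the weight product: each transposition $(u,v)$ contributes $A(\Phi_w)_{u,v}A(\Phi_w)_{v,u}=\varphi(\vec e_{u,v})\overline{\varphi(\vec e_{u,v})}\,w(e_{u,v})^{2}=w(e_{u,v})^{2}$, so altogether the isolated-edge contribution is $w(H_e)^{2}$; each cycle component $C$ contributes $w(C)\varphi(\vec C)$ or $w(C)\overline{\varphi(\vec C)}$ according to its orientation, and summing these two possibilities gives $2w(C)\,\Re(\varphi(C))$. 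Multiplying across all cycle components and using $w(H)=w(H_e)\prod_{C\in H}w(C)$ yields $w(H_e)\,w(H)=w(H_e)^{2}\prod_{C\in H}w(C)$, so the combined contribution of the $2^{c(H)}$ permutations attached to $H$ is
\begin{equation*}
(-1)^{i-p(H)}\,2^{c(H)}\,w(H_e)\,w(H)\prod_{C\in H}\Re(\varphi(C)).
\end{equation*}

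Finally I would sum over all $H\in\mathcal{H}_i$ and multiply by the overall $(-1)^{i}$ coming from the principal-minor formula; since $(-1)^{i}(-1)^{i-p(H)}=(-1)^{p(H)}$, this produces exactly the claimed identity. The one place where care is genuinely needed is the bookkeeping of the factor $w(H_e)^{2}$ versus $w(H)=w(H_e)\prod_{C}w(C)$: the edges in $H_e$ effectively appear \emph{twice} in the determinant expansion (once in each direction of the transposition), whereas the edges on each cycle component appear only once per orientation. Recognising this is what turns the naive product $w(H_e)^{2}\prod_{C}w(C)$ into the symmetric-looking $w(H_e)\,w(H)$ stated in the theorem; the rest of the derivation is the standard permutation/elementary-subgraph correspondence inherited from the unsigned case.
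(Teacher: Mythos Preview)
Your argument is correct and is precisely the standard permutation/elementary-subgraph derivation of the Sachs coefficient formula, adapted to Hermitian weighted entries; in particular your bookkeeping of $\mathrm{sgn}(\sigma)=(-1)^{i-p(H)}$, of the $2^{c(H)}$ orientations, and of the identity $w(H_e)^2\prod_{C}w(C)=w(H_e)\,w(H)$ is accurate. The paper itself omits the proof entirely, stating only that it ``is similar to that of the weighted case,'' so your write-up supplies exactly the argument the authors had in mind rather than a different one.
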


If we choose $ \mathbb{T} $-gain graph $ \Phi $ to be the underlying graph $ G $, then above formula become the known Weighted Sachs formula.
\begin{corollary}[Weighted Sachs formula, \cite{Sign_distance}]
	Let $ (G, w) $ be a weighted graph with characteristic polynomial $ \chi(G, w;x)=x^{n}+a_1x^{n-1}+\dots+a_n $. Then the coefficients can be expressed as
	\begin{equation}
		a_i=\sum\limits_{H\in \mathcal{H}_i}(-1)^{p(H)}2^{c(H)}w(H_e) w(H),
	\end{equation}
	where $ c(H), p(H) $ denote the number of cycles and the number of components in $ H $, respectively.	
\end{corollary}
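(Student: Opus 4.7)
The plan is to obtain the Weighted Sachs formula as an immediate specialization of the Weighted gain Sachs formula (Theorem 4.1) to the trivial gain $\varphi \equiv 1$. First I would observe that any weighted graph $(G, w)$ can be regarded as a positively weighted $\mathbb{T}$-gain graph $\Phi_w = (G, \varphi, w)$ with $\varphi \equiv 1$; in this case the weighted gain function satisfies
$$\varphi_w(\overrightarrow{e}_{i,j}) = \varphi(\overrightarrow{e}_{i,j})\,w(e_{i,j}) = w(e_{i,j}),$$
so $A(\Phi_w) = A(G_w)$ as $n\times n$ Hermitian matrices (they are in fact real symmetric). Consequently the characteristic polynomials coincide, $\chi(\Phi_w; x) = \chi(G, w; x)$, and the coefficients $a_i$ on the two sides refer to the same scalars.

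Next, for every cycle $C$ in the underlying graph $G$, the gain along an orientation of $C$ is
$$\varphi(\overrightarrow{C}) = \prod_{\overrightarrow{e}\in \overrightarrow{E(C)}} \varphi(\overrightarrow{e}) = 1,$$
so $\Re(\varphi(C)) = 1$. Therefore, for every elementary subgraph $H \in \mathcal{H}_i$, the factor $\prod_{C \in H}\Re(\varphi(C))$ reduces to $1$ (with the usual empty-product convention when $H$ has no cycles, i.e., when $H = H_e$). Plugging $\varphi \equiv 1$ into the formula of Theorem 4.1 therefore yields
$$a_i = \sum_{H \in \mathcal{H}_i}(-1)^{p(H)}\,2^{c(H)}\, w(H_e)\, w(H)\, \prod_{C \in H}\Re(\varphi(C)) = \sum_{H \in \mathcal{H}_i}(-1)^{p(H)}\,2^{c(H)}\, w(H_e)\, w(H),$$
which is exactly the claimed Weighted Sachs formula.

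There is no substantial obstacle in this derivation, since the corollary is simply the $\varphi \equiv 1$ restriction of a result already granted. The only bookkeeping point is to observe that the definitions of $\mathcal{H}_i$, $H_e$, $w(H)$, $c(H)$, and $p(H)$ depend only on the underlying graph $G$ and the weight $w$, so they are literally the same on both sides; and that the empty-product convention for $\prod_{C \in H}\Re(\varphi(C))$ when $H$ contains no cycle correctly reproduces the pure-matching contribution in the Weighted Sachs formula.
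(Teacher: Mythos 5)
Your proposal is correct and matches the paper's own derivation: the paper likewise obtains this corollary by taking $\varphi\equiv 1$ in the Weighted gain Sachs formula, so that $\Rea(\varphi(C))=1$ for every cycle and the extra product disappears. Your write-up simply spells out the bookkeeping (identification of $A(\Phi_w)$ with $A(G_w)$ and the empty-product convention) that the paper leaves implicit.
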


Now we are ready to state two interesting results which generalize the corresponding known result for $ \mathbb{T} $-gain graph and signed graph.
\begin{theorem}\label{th4.2}
	Let $ \Phi_{w}=(G, \varphi, w) $ be a positively weighted $ \mathbb{T} $-gain graph. Then $ \Phi_{w} $ and $ G_w$ are cospectral if and only if $ \Phi $ is balanced.
\end{theorem}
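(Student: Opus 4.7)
The plan is to treat the two directions separately, using the key observation that $|A(\Phi_w)| = A(G_w)$ entrywise, since every gain has unit modulus and every weight is positive. For the forward direction, suppose $\Phi$ is balanced. Then every cycle of $\Phi$ has gain $1$, and by Lemma \ref{lm2.1} (comparing with the trivial gain graph $(G,1)$ on a normal spanning tree) there exists a switching function $\zeta : V(G) \to \mathbb{T}$ with $\varphi(\overrightarrow{e}_{i,j}) = \overline{\zeta(v_i)}\,\zeta(v_j)$ on every edge. Letting $U = \operatorname{diag}(\zeta(v_1), \ldots, \zeta(v_n))$, a direct entrywise check gives $A(\Phi_w) = U^{*} A(G_w)\, U$, so $A(\Phi_w)$ and $A(G_w)$ are unitarily similar and hence cospectral.

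For the converse, suppose $\Phi_w$ and $G_w$ are cospectral. Since $G$ is connected and the weights are positive, $A(G_w)$ is irreducible and non-negative, and by Perron--Frobenius its spectral radius $\rho(G_w)$ is a simple eigenvalue. Cospectrality forces $\rho(G_w) \in \operatorname{spec}(A(\Phi_w))$, so $\mu = \rho(G_w) = e^{i \cdot 0}\rho(A(\Phi_w))$ is a maximum-modulus eigenvalue of $A(\Phi_w)$. Applying Theorem \ref{th2.2} with $A = A(G_w)$, $B = A(\Phi_w)$, and $\theta = 0$ yields a unitary diagonal matrix $D = \operatorname{diag}(d_1, \ldots, d_n)$ with $A(\Phi_w) = D\, A(G_w)\, D^{-1}$. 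Comparing $(i,j)$-entries on an edge $e_{i,j}$ gives $\varphi(\overrightarrow{e}_{i,j})\, w(e_{i,j}) = d_i\, w(e_{i,j})\, \overline{d_j}$, and positivity of $w$ yields $\varphi(\overrightarrow{e}_{i,j}) = d_i \overline{d_j}$. For any oriented cycle $\overrightarrow{C} = v_{i_1} v_{i_2} \cdots v_{i_k} v_{i_1}$, the gain telescopes: $\varphi(\overrightarrow{C}) = \prod_{j=1}^{k} d_{i_j}\, \overline{d_{i_{j+1}}} = 1$ (indices modulo $k$), because $|d_{i_j}| = 1$. Hence every cycle of $\Phi$ is neutral, i.e., $\Phi$ is balanced.

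The main subtle point is verifying that the maximum-modulus eigenvalue used in Theorem \ref{th2.2} can be chosen with $\theta = 0$; this is what converts the conclusion of Theorem \ref{th2.2} into a clean diagonal similarity $A(\Phi_w) = D\, A(G_w)\, D^{-1}$ rather than a version twisted by a unimodular scalar. Cospectrality together with the simplicity of the Perron root of $A(G_w)$ delivers this for free, and that is what makes the Perron--Frobenius route clean. An alternative approach via the weighted gain Sachs formula of Theorem \ref{Th3.1} --- equate $\chi(\Phi_w;x) = \chi(G_w;x)$, and induct on cycle length starting from the girth, using $\Rea(\varphi(C)) \leq 1$ with equality iff $\varphi(C) = 1$ --- also forces every cycle to be neutral, but the combinatorial bookkeeping for cycles that intertwine with matchings in $\mathcal{H}_i$ is noticeably more delicate, so I would prefer the Perron--Frobenius argument above.
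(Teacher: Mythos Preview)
Your proof is correct. The forward direction is essentially the paper's argument (the paper phrases it via the Hadamard product identity in Proposition~\ref{prop3.1}, but the content is the same diagonal unitary similarity). For the converse, however, you take a genuinely different route: the paper argues combinatorially via the weighted gain Sachs formula (Theorem~\ref{Th3.1}), picking a smallest unbalanced cycle of length $k$ and computing
\[
b_k - a_k \;=\; 2\sum_{C \in \mathcal{C}_k} w(C)\bigl(1 - \Rea(\varphi(C))\bigr) \;>\; 0,
\]
which contradicts cospectrality. You instead use cospectrality to place the Perron root $\rho(G_w)$ in $\spec(A(\Phi_w))$ with $\theta=0$, and then invoke Theorem~\ref{th2.2} to obtain the diagonal similarity $A(\Phi_w)=D\,A(G_w)\,D^{-1}$, from which balancedness is read off entrywise. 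This is exactly the mechanism the paper reserves for the \emph{next} result (Theorem~\ref{Th3.4}); you have noticed that full cospectrality already pins down $\theta=0$, so the same spectral tool settles Theorem~\ref{th4.2} as well. Your approach is shorter and avoids the coefficient bookkeeping of the Sachs formula; the paper's approach has the mild advantage of not relying on irreducibility (connectedness) of $A(G_w)$, though connectedness is a standing hypothesis anyway. One small remark: the simplicity of the Perron root is not actually needed in your argument --- cospectrality alone gives $\rho(G_w)\in\spec(A(\Phi_w))$ and $\rho(A(\Phi_w))=\rho(A(G_w))$, which is all Theorem~\ref{th2.2} requires.
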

\begin{proof}
	If $ \Phi=(G, \varphi) $ is balanced, then there exists a diagonal unitary matrix $ U $ such that $ A(\Phi)=UA(G)U^{*} $. Now, by Proposition \ref{prop3.1},
	\begin{align*}
		A(\Phi_w)=A(\Phi)\circ A(G_w)&=UA(G)U^{*} \circ A(G_w)=U(A(G) \circ A(G_w))U^{*}=UA(G_w)U^{*}.
	\end{align*} Thus $ \Phi_{w} $ and $ G_w$ are cospectral
	
	Conversely, suppose $ \Phi_w $ and $ G_w $ are cospectral. Let $ \chi(\Phi_{w};x)=\sum\limits_{i=0}^{n}a_ix^{n-i} $ and $ \chi(G,w;x)=\sum\limits_{i=0}^{n}b_ix^{n-i} $ be the characteristic polynomials of $\Phi_w$ and $G_w$, respectively. Suppose that $ \Phi $ is not balanced. Then there exists a cycle of  smallest length, say $ k $, which is not balanced. Let $\mathcal{C}_{k}$ be the collection of all unbalanced $ k $ cycles. Then, by Theorem \ref{Th3.1}, $$b_k-a_k=2\sum\limits_{C\in \mathcal{C}_k}w(C).\{ 1-\Rea(\varphi(C))\}>0, $$
	a contradiction. Thus $ \Phi $ is balanced.
\end{proof}


The well known Acharya's spectral criterion for the balance  of signed graphs follows from  Theorem \ref{th4.2}.
\begin{corollary} [{\cite[Corollary 1.1]{Acharya}}]
	Let $ \Psi=(G, \psi) $ be a signed graph. Then spectra of $ \Psi $ and $ G $ coincide if and only if $ \Psi $ is balanced.
\end{corollary}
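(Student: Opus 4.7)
The plan is to derive this corollary as an immediate specialization of Theorem \ref{th4.2}. First, I would observe that a signed graph $\Psi = (G, \psi)$, with $\psi: E(G) \to \{\pm 1\}$, is exactly a $\mathbb{T}$-gain graph whose gain function takes values in the subgroup $\{\pm 1\} \subset \mathbb{T}$. Hence, all notions defined earlier for $\mathbb{T}$-gain graphs (including balance via neutral cycles, and the adjacency matrix $A(\Psi)$) apply to $\Psi$ in the usual signed-graph sense.

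Next, I would introduce the trivial (constant) weight function $w: E(G) \to \mathbb{R}^+$ defined by $w(e) = 1$ for every $e \in E(G)$. With this choice, the construction in the definition of a positively weighted $\mathbb{T}$-gain graph gives $\varphi_w(\overrightarrow{e}_{i,j}) = \psi(\overrightarrow{e}_{i,j}) \cdot 1 = \psi(\overrightarrow{e}_{i,j})$, so that the positively weighted $\mathbb{T}$-gain graph $\Psi_w$ coincides with $\Psi$ itself and $A(\Psi_w) = A(\Psi)$. Similarly, the weighted underlying graph $G_w$ has adjacency matrix $A(G_w) = A(G)$, since all weights equal $1$. Hence the spectra of $\Psi_w$ and $G_w$ are precisely the spectra of $\Psi$ and $G$.

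Finally, I would invoke Theorem \ref{th4.2} directly: $\Psi_w$ and $G_w$ are cospectral if and only if the underlying $\mathbb{T}$-gain graph $\Psi$ is balanced. Translating back via the identifications above, this reads: the spectra of $\Psi$ and $G$ coincide if and only if $\Psi$ is balanced, which is exactly Acharya's criterion. There is no real obstacle here — the work has already been done in Theorem \ref{th4.2} and its proof via the weighted gain Sachs formula (Theorem \ref{Th3.1}); the only thing to verify is the bookkeeping that the trivial weight recovers the signed-graph setting, which is immediate from the definitions.
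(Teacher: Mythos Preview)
Your proposal is correct and follows exactly the paper's own approach: the paper's proof simply reads ``By taking $\varphi=\pm 1$ and $w=1$ in Theorem \ref{th4.2}, we get the result.'' Your write-up merely spells out the bookkeeping behind that one-line specialization.
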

\begin{proof}
	By taking $ \varphi=\pm 1 $ and $ w=1 $ in Theorem  \ref{th4.2}, we get the result.
\end{proof}
Another consequence is the following recent result about the signed graph.
\begin{corollary}[{\cite[Theorem 2.4]{Sign_distance}}]
	Let $ \Psi=(G, \psi)$  be a signed graph and $ w $ be a positively weighted function, where $ \psi=\pm 1 $. Then $ \Psi_w $ and $ G_w $ are cospectral if and only if $ \Psi $ is balanced.
\end{corollary}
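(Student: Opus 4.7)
The plan is to derive this corollary as an immediate specialization of Theorem \ref{th4.2}. The point is that a signed graph $\Psi=(G,\psi)$ with $\psi:E(G)\to\{\pm 1\}$ fits into the $\mathbb{T}$-gain framework because $\{\pm 1\}\subset\mathbb{T}$, so $\psi$ can legitimately be viewed as a gain function $\varphi$. Consequently the positively weighted signed graph $\Psi_w$ is exactly the positively weighted $\mathbb{T}$-gain graph $\Phi_w=(G,\varphi,w)$ corresponding to $\varphi=\psi$, and the adjacency matrices coincide: $A(\Psi_w)=A(\Psi)\circ A(G_w)=A(\Phi)\circ A(G_w)=A(\Phi_w)$.

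First I would make the identification $\varphi=\psi$ explicit, observing that the notion of balance for $\Psi$ (every cycle has positive sign) coincides with the notion of balance for the associated $\mathbb{T}$-gain graph $\Phi=(G,\psi)$ (every cycle has gain $1$), since the sign of a cycle is the product of its edge signs and $(\pm 1)$-products in $\mathbb{T}$ agree with $(\pm 1)$-products in $\{\pm 1\}$. Then I would invoke Theorem \ref{th4.2}, which yields that $\Phi_w$ and $G_w$ are cospectral if and only if $\Phi$ is balanced; translating back via the identification $\varphi=\psi$ gives exactly the assertion that $\Psi_w$ and $G_w$ are cospectral if and only if $\Psi$ is balanced.

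There is essentially no obstacle here; the proof is a one-line reduction to the general theorem, with the only content being the verification that the definitions (adjacency matrix, balance, positive weighting) specialize correctly when the gain function takes values in $\{\pm 1\}$. I would keep the proof to two or three sentences, perhaps mentioning that one could alternatively repeat the argument of Theorem \ref{th4.2} using the ordinary weighted Sachs formula to compute $b_k-a_k=2\sum_{C\in\mathcal{C}_k}w(C)\{1-\operatorname{Re}(\psi(C))\}=4\sum_{C\in\mathcal{C}_k}w(C)>0$ whenever $\Psi$ has an unbalanced cycle of smallest length $k$, but this is subsumed by the general case.
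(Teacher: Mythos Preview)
Your proposal is correct and matches the paper's approach exactly: the paper presents this corollary as an immediate consequence of Theorem~\ref{th4.2} (introduced with ``Another consequence is the following recent result about the signed graph'') and gives no separate proof, the understood argument being precisely the specialization $\varphi=\psi\in\{\pm1\}$ that you spell out. Your additional remarks on the explicit Sachs-formula computation are accurate but, as you note, unnecessary.
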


Next, we prove one of the main results of this article.

\begin{theorem}\label{Th3.4}
	Let $ \Phi_w=(G, \varphi, w) $ be a connected positively weighted $\mathbb{T}$-gain graph. Then the spectral radius of  $ \Phi_w $ and $ G_w$ are equal if and only if either $ \Phi $ or $ -\Phi $ is balanced.
\end{theorem}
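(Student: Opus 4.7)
The plan is to use Theorem \ref{th2.2} (the Horn--Johnson maximum-modulus result) applied to $A=A(G_w)$ and $B=A(\Phi_w)$. Since every entry of $A(\Phi)$ is either a unit complex number on an edge or $0$, we have $|B|=A$ entrywise, while the connectedness of $G$ together with $w>0$ ensures that $A$ is non-negative and irreducible. So both hypotheses of Theorem \ref{th2.2} are immediate, and it is the algebraic consequence $B=e^{i\theta}DAD^{-1}$ that will do the real work for the converse.

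For the easy direction, suppose first that $\Phi$ is balanced. Then Theorem \ref{th4.2} gives $\spec(\Phi_w)=\spec(G_w)$, and in particular the spectral radii agree. If instead $-\Phi$ is balanced, then $A(-\Phi)=-A(\Phi)$, so
\[
A((-\Phi)_w)=A(-\Phi)\circ A(G_w)=-A(\Phi_w).
\]
Applying Theorem \ref{th4.2} to $-\Phi$ yields $\spec(-\Phi_w)=\spec(G_w)$. Since $A(\Phi_w)$ is Hermitian its spectrum lies in $\mathbb{R}$, so $\rho(-A(\Phi_w))=\rho(A(\Phi_w))$, and therefore $\rho(\Phi_w)=\rho(G_w)$.

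For the converse, assume $\rho(A)=\rho(B)$. Writing a maximum-modulus eigenvalue of $B$ as $e^{i\theta}\rho(B)$, Theorem \ref{th2.2} produces a unitary diagonal matrix $D$ with $B=e^{i\theta}DAD^{-1}$. Taking adjoints and using $A^{*}=A$ together with $D^{*}=D^{-1}$ gives
\[
B=B^{*}=e^{-i\theta}DAD^{-1},
\]
so comparing the two expressions forces $e^{2i\theta}=1$, i.e.\ $e^{i\theta}=\pm 1$. Write $D=\mathrm{diag}(d_{1},\ldots,d_{n})$ with $|d_{k}|=1$. Comparing $(i,j)$-entries of $B=\pm DAD^{-1}$ on an edge $e_{i,j}$ yields
\[
\varphi(\overrightarrow{e}_{i,j})\,w(e_{i,j})=\pm\, d_{i}\,w(e_{i,j})\,d_{j}^{-1},
\]
hence $\varphi(\overrightarrow{e}_{i,j})=\pm d_{i}d_{j}^{-1}$. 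In the $+$ case $\zeta(v_{k}):=d_{k}$ is a switching function exhibiting $\Phi\sim(G,\mathbf{1})$, so $\Phi$ is balanced; in the $-$ case the same $\zeta$ realises $-\Phi\sim(G,\mathbf{1})$, so $-\Phi$ is balanced.

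The main obstacle I expect is purely bookkeeping: squeezing the conclusion of Theorem \ref{th2.2} through the Hermiticity of $A(\Phi_w)$ to isolate the two cases $e^{i\theta}=\pm 1$, and then reading off a switching function from the unitary diagonal conjugation. Everything else (irreducibility of $A(G_w)$, the Hadamard identity $A(\Phi_w)=A(\Phi)\circ A(G_w)$, and the forward direction via Theorem \ref{th4.2}) is routine.
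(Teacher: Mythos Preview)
Your proof is correct and follows essentially the same route as the paper: both directions rest on Theorem~\ref{th2.2} applied to $A=A(G_w)$ and $B=A(\Phi_w)$, with balance read off from the diagonal unitary conjugation. The only cosmetic differences are that the paper splits into the cases $\rho(\Phi_w)=\mu_1$ and $\rho(\Phi_w)=-\mu_n$ (using that Hermitian eigenvalues are real) where you instead take adjoints to force $e^{2i\theta}=1$, and the paper cancels the weights via a Hadamard product with an entrywise-inverse matrix (Proposition~\ref{prop3.1}) where you simply divide the $(i,j)$-entry identity by $w(e_{i,j})>0$; both are equivalent bookkeeping.
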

\begin{proof}
	Suppose either $ \Phi $ or $ -\Phi $ is balanced. Then, it is easy to see that, the spectral radius $ \Phi_{w} $ and $ G_w $ are equal.
	Conversely, suppose $ \rho(\Phi_{w})=\rho(G_w) $. Let $ \mu_n\leq \mu_{n-1}\leq \dots \leq \mu_1 $ be the eigenvalues of $ \Phi_{w} $. Then either $ \rho(\Phi_{w})=\mu_1 $ or $ \rho(\Phi_{w})=-\mu_n $.
	
	\noindent{\bf Case 1:} If $ \rho(\Phi_{w})=\mu_1 $, then, by Theorem \ref{th2.2}, there exists a diagonal unitary matrix $ D $ such that $ A(\Phi_{w})=DA(G_w)D^{*} $. Now $ A(\Phi)\circ A(G_w)=D(A(G) \circ A(G_w))D^{*} $. Then, by Proposition \ref{prop3.1}, $ A(\Phi)\circ A(G_w)=(DA(G)D^{*} )\circ A(G_w)$. Define  $ B=(b_{ij}) $ as follows:   $ b_{ij} $ is the inverse of the nonzero $ (i,j)th $-entry of $ A(G_w) $, otherwise zero. Then $( A(\Phi)\circ A(G_w))\circ B=((DA(G)D^{*} )\circ A(G_w))\circ B$. Thus, by Proposition \ref{prop3.1}, we have $A(\Phi)=DA(G)D^{*}$. Thus $ \Phi $ is balanced.
	
	\noindent{\bf Case 2:} If $ \rho(\Phi_w)=-\mu_n$, then $ \mu_n=e^{i\pi} \rho(\Phi_w)$. By Theorem \ref{th2.2}, there exists a diagonal unitary matrix $ D $, such that $ A(\Phi_{w})=e^{i\pi}DA(G_w)D^{*} $. That is, $-A(\Phi_{w})=DA(G_w)D^{*} $. Then $ A(-\Phi)\circ A(G_w)=D(A(G) \circ A(G_w))D^{*} $. By Proposition \ref{prop3.1}, we have $A(-\Phi)=DA(G)D^{*}$. Thus $ -\Phi $ is balanced.
\end{proof}
Now we present the following consequences of the above results.
\begin{corollary}\label{Cor3.2}
	Let $ \Phi_w=(G, \varphi, w) $ be a connected positively weighted $\mathbb{T}$-gain graph. Then the largest eigenvalue of  $ \Phi_w $ and $ G_w $ are equal if and only if $ \Phi $ is balanced.	
\end{corollary}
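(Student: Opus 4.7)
The backward implication is immediate from Theorem \ref{th4.2}: if $\Phi$ is balanced, then $\Phi_w$ and $G_w$ are cospectral, so in particular they share the same largest eigenvalue. The substance of the corollary is the forward direction.

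Suppose $\mu_1(\Phi_w) = \mu_1(G_w)$, where $\mu_1$ denotes the largest eigenvalue. My plan is to upgrade this to an equality of spectral radii so that Theorem \ref{th2.2} applies in the same way as in Case 1 of the proof of Theorem \ref{Th3.4}. First I would observe that $A(G_w)$ is a non-negative Hermitian matrix, so $\mu_1(G_w) = \rho(G_w)$. Next, the entrywise identity $|A(\Phi_w)| = A(G_w)$ gives $\rho(\Phi_w) \le \rho(G_w)$. Combined with the trivial $\mu_1(\Phi_w) \le \rho(\Phi_w)$, the chain
\[
\mu_1(\Phi_w) \;\le\; \rho(\Phi_w) \;\le\; \rho(G_w) \;=\; \mu_1(G_w) \;=\; \mu_1(\Phi_w)
\]
collapses to equality everywhere. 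Thus $\rho(\Phi_w) = \rho(G_w)$, and a maximum modulus eigenvalue of $A(\Phi_w)$ is realized by the \emph{positive} real number $\mu_1(\Phi_w)$, i.e.\ with argument $\theta = 0$.

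Having secured this, I would mimic Case 1 of the proof of Theorem \ref{Th3.4}. Since $G$ is connected, $A(G_w)$ is irreducible, and Theorem \ref{th2.2} (applied with $\theta = 0$) produces a diagonal unitary matrix $D$ with $A(\Phi_w) = D A(G_w) D^{-1}$. Taking $B$ to be the matrix whose $(i,j)$-entry is the reciprocal of $[A(G_w)]_{ij}$ on the edge support of $G$ and zero elsewhere, one has $A(\Phi) = A(\Phi_w) \circ B$ and $A(G) = A(G_w) \circ B$. Two applications of Proposition \ref{prop3.1} yield $A(\Phi) = D A(G) D^{-1}$, showing that $\Phi$ is switching equivalent to its underlying graph and is therefore balanced.

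The one place requiring care — and really the point of the corollary relative to Theorem \ref{Th3.4} — is ruling out the anti-balanced alternative: if we only knew $\rho(\Phi_w) = \rho(G_w)$, then Theorem \ref{th2.2} could produce $\theta = \pi$ and yield $A(\Phi_w) = -D A(G_w) D^{-1}$, corresponding to $-\Phi$ being balanced. This is precisely excluded by the stronger hypothesis that the equality of spectral radii is witnessed by the \emph{largest} (hence positive) eigenvalue $\mu_1(\Phi_w) = \rho(G_w) > 0$; this forces $\theta = 0$ in the application of Theorem \ref{th2.2}, and the anti-balanced case simply cannot arise.
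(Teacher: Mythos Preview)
Your proof is correct and follows the same route the paper leaves implicit: the paper states the corollary as a consequence of Theorem~\ref{Th3.4} without a separate proof, and your argument is precisely the elaboration one would expect---reduce to $\rho(\Phi_w)=\rho(G_w)$ via the chain inequality, observe that the hypothesis on the \emph{largest} eigenvalue forces $\theta=0$ in Theorem~\ref{th2.2}, and then repeat Case~1 of the proof of Theorem~\ref{Th3.4} verbatim. Your final paragraph correctly isolates the only extra content beyond Theorem~\ref{Th3.4}, namely that $\theta=0$ rules out the anti-balanced alternative.
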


\begin{corollary}\label{lm3.2}
	Let $ \Phi=(G, \varphi) $ be a connected $ \mathbb{T} $-gain graph. Then the largest eigenvalue of $ \Phi $ and $ G $ are equal if and only if $ \Phi $ is balanced.
\end{corollary}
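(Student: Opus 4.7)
The plan is to obtain this as the unweighted specialization of Corollary \ref{Cor3.2}. Given any connected $\mathbb{T}$-gain graph $\Phi=(G,\varphi)$, I would take the trivial weight function $w\equiv 1$ on $E(G)$. Then, by definition, the associated positively weighted $\mathbb{T}$-gain graph $\Phi_{w}=(G,\varphi,w)$ satisfies $\varphi_{w}(\overrightarrow{e}_{i,j})=\varphi(\overrightarrow{e}_{i,j})\cdot 1=\varphi(\overrightarrow{e}_{i,j})$, so $A(\Phi_w)=A(\Phi)$. Similarly $A(G_w)=A(G)$. In particular, the spectra, and therefore the largest eigenvalues, of $\Phi_w$ and $\Phi$ coincide, and the same holds for $G_w$ and $G$.

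Now I would invoke Corollary \ref{Cor3.2}: the largest eigenvalue of $\Phi_w$ equals the largest eigenvalue of $G_w$ if and only if $\Phi$ is balanced. Translating through the identifications above, this reads: the largest eigenvalue of $\Phi$ equals the largest eigenvalue of $G$ if and only if $\Phi$ is balanced, which is the claim.

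There is essentially no obstacle here, since the statement is a genuine specialization rather than an analogue requiring a separate argument. The only point one might wish to write out carefully is that $A(\Phi)\circ A(G_w)=A(\Phi)\circ A(G)=A(\Phi)$ (as the Hadamard factor is the $0/1$ adjacency matrix of $G$, which acts as the identity on the support of $A(\Phi)$), so that no part of the hypothesis or conclusion of Corollary \ref{Cor3.2} is lost in passing to the unweighted case. If one preferred a self-contained proof, one could alternatively run the argument of Theorem \ref{th4.2} (the sufficiency direction gives $A(\Phi)=UA(G)U^{*}$ whenever $\Phi$ is balanced, hence equal spectra; the necessity direction uses Theorem \ref{th2.2} applied directly to $A=A(G)$ and $B=A(\Phi)$, noting $A\geq|B|$ and that equality of the largest eigenvalues forces $\rho(A)=\rho(B)$ since $A(G)$ is nonnegative irreducible). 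But invoking Corollary \ref{Cor3.2} is the shorter route and is how I would present it.
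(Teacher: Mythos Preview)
Your proposal is correct and follows exactly the paper's own proof, which simply says ``The proof follows from Corollary \ref{Cor3.2} by assuming $w=1$.'' Your additional remarks spelling out $A(\Phi_w)=A(\Phi)$ and $A(G_w)=A(G)$ are accurate elaborations of this one-line reduction; the only minor slip is that the alternative self-contained argument you sketch (via Theorem \ref{th2.2}) is actually the method of Theorem \ref{Th3.4} rather than Theorem \ref{th4.2}, but this does not affect your main proof.
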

\begin{proof}
	The proof follows from Corollary \ref{Cor3.2} by assuming $ w=1 $.
\end{proof}

Also Theorem \ref{Th3.4} unifies the following recent results.
\begin{corollary}[{\cite[Corollary 2.7]{Sign_distance}}]
	Let $ \Psi_w=(G, \psi,w) $ be a connected positively weighted signed graph. Then $ \Psi$ is balanced if and only if the largest eigenvalue of $ \Psi_w $ and $ G_w $ coincide.
\end{corollary}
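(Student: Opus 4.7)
The plan is to obtain this corollary as a direct specialization of Corollary \ref{Cor3.2}, which in turn was deduced from Theorem \ref{Th3.4}. The key observation is that a signed graph is nothing but a $\mathbb{T}$-gain graph whose gain function takes values in $\{+1, -1\} \subset \mathbb{T}$, so a positively weighted signed graph $\Psi_w = (G, \psi, w)$ is a particular instance of a positively weighted $\mathbb{T}$-gain graph $\Phi_w = (G, \varphi, w)$ with $\varphi = \psi$.

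With this identification in place, I would simply apply Corollary \ref{Cor3.2} to $\Psi_w$: it states that for a connected positively weighted $\mathbb{T}$-gain graph, the largest eigenvalue of $\Psi_w$ and of $G_w$ coincide if and only if $\Psi$ is balanced. The notion of balance used in Corollary \ref{Cor3.2} is balance as a $\mathbb{T}$-gain graph, i.e.\ every cycle has gain $1$; for a signed graph this reduces to the standard condition that every cycle has sign $+1$, which is exactly balance in the sense of signed graphs. Hence the two notions of balance agree on this subclass, and the corollary follows immediately.

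There is really no obstacle here: the only thing to verify is the compatibility of definitions (signed graph balance versus $\mathbb{T}$-gain balance, and the weighted adjacency matrix $A(\Psi_w) = A(\Psi) \circ A(G_w)$ being the same object in both frameworks), both of which are immediate from the definitions given in Section~\ref{prelim} and Section~\ref{Positively_weighted}. I would therefore write the proof as a single line: \emph{Taking $\varphi = \psi \in \{\pm 1\}$ in Corollary \ref{Cor3.2} yields the result.}
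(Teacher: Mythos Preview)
Your proposal is correct and matches the paper's own proof essentially verbatim: the paper also simply takes $\varphi=\pm 1$ in Corollary~\ref{Cor3.2}. The additional remarks you make about compatibility of the notions of balance and of the weighted adjacency matrix are accurate and harmless elaborations.
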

\begin{proof}
	By taking $ \varphi=\pm1 $,	the result follows from Corollary \ref{Cor3.2}.
\end{proof}

\begin{corollary}[{\cite[Theorem 4.4]{Our-paper-1}}]
	Let $ \Phi=(G, \varphi) $ be a connected $ \mathbb{T} $-gain graph. Then spectral radius of $ \Phi $ and $ G $ coincide if and only if either $ \Phi $ is balanced or $ -\Phi $ is balanced.
\end{corollary}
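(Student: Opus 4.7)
My plan is to obtain this corollary as a direct specialization of Theorem \ref{Th3.4}. Given any connected $\mathbb{T}$-gain graph $\Phi = (G, \varphi)$, I would define the weight function $w : E(G) \to \mathbb{R}^{+}$ by $w(e) = 1$ for every edge $e \in E(G)$. With this choice, the weighted gain function satisfies $\varphi_w(\overrightarrow{e}_{i,j}) = \varphi(\overrightarrow{e}_{i,j}) \cdot 1 = \varphi(\overrightarrow{e}_{i,j})$, so the associated positively weighted $\mathbb{T}$-gain graph $\Phi_w$ coincides with $\Phi$, and correspondingly $A(\Phi_w) = A(\Phi)$. Likewise, the underlying weighted graph $G_w$ has $A(G_w) = A(G)$.

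Consequently the spectral radii satisfy $\rho(\Phi_w) = \rho(\Phi)$ and $\rho(G_w) = \rho(G)$. Invoking Theorem \ref{Th3.4}, which asserts that $\rho(\Phi_w) = \rho(G_w)$ holds if and only if either $\Phi$ or $-\Phi$ is balanced, we immediately obtain the equivalence $\rho(\Phi) = \rho(G) \iff \Phi \text{ or } -\Phi \text{ is balanced}$. The connectedness hypothesis on $\Phi$ is exactly what is needed to apply Theorem \ref{Th3.4}, since it ensures that $A(G)$ is irreducible, which in turn was the irreducibility hypothesis required to use Theorem \ref{th2.2} in the proof of Theorem \ref{Th3.4}.

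There is no real obstacle here: the argument is a one-line specialization. The only thing worth verifying carefully is that setting $w \equiv 1$ reduces all the constructions in Section \ref{Positively_weighted} (the weighted adjacency matrix, the Hadamard product decomposition $A(\Phi_w) = A(\Phi) \circ A(G_w)$, and so on) to the ordinary unweighted gain-graph setup, which is immediate from the definitions.
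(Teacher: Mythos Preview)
Your proposal is correct and matches the paper's own proof exactly: the paper simply writes ``Take $w=1$ in Theorem \ref{Th3.4}.'' Your additional remarks about $A(\Phi_w)=A(\Phi)$, $A(G_w)=A(G)$, and the role of connectedness for irreducibility are sound elaborations of that one-line specialization.
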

\begin{proof}
	Take $ w=1 $ in Theorem \ref{Th3.4}.
\end{proof}

\begin{corollary}[{(Stani\'c's spectral criterion \cite[Lemma 2.1]{Stanic})}]
	Let $\Psi=(G, \psi) $ be a connected signed graph. Then the largest eigenvalue of $ \Psi $ and $ G $ coincide if and only if $ \Psi $ is balanced.
\end{corollary}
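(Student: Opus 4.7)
The plan is to obtain Stani\'c's criterion as a direct specialization of Corollary \ref{lm3.2}, which was just established for arbitrary connected $\mathbb{T}$-gain graphs. The key observation is structural: a signed graph $\Psi=(G,\psi)$ with signature $\psi:\overrightarrow{E}(G)\to\{-1,+1\}$ is a $\mathbb{T}$-gain graph in the sense of this paper, because $\{-1,+1\}\subset\mathbb{T}$ and the relation $\psi(\overrightarrow{e}_{s,t})=\psi(\overrightarrow{e}_{t,s})^{-1}$ is automatic (each sign equals its own inverse). Consequently the adjacency matrix $A(\Psi)$ coincides with the $\mathbb{T}$-gain adjacency matrix $A(\Phi)$ for $\Phi=(G,\psi)$, and the notions of balance (product of edge signs around every cycle equals $+1$) and of neutral cycle ($\varphi(\overrightarrow{C})=1$) are identical on this subclass.

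With this identification in place, I would apply Corollary \ref{lm3.2} to $\Phi:=\Psi$. Since $G$ is connected, so is $\Phi$, and the corollary yields that the largest eigenvalue $\lambda_{\max}(A(\Phi))$ equals $\lambda_{\max}(A(G))$ if and only if $\Phi$ is balanced. Rewriting $A(\Phi)=A(\Psi)$ and interpreting balance in the signed-graph sense gives exactly the assertion: the largest eigenvalue of $\Psi$ and of $G$ coincide if and only if $\Psi$ is balanced.

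Equivalently, one can take $w\equiv 1$ in Corollary \ref{Cor3.2}: then $\Phi_w=\Phi$ and $G_w=G$, reducing to the same statement. There is really no obstacle here; the work has already been done in Theorem \ref{Th3.4} (via Theorem \ref{th2.2} and Proposition \ref{prop3.1}), and this corollary just records the $\{\pm 1\}$-gained case. Thus the proof reduces to one sentence: apply Corollary \ref{lm3.2} (or, equivalently, Corollary \ref{Cor3.2} with $w=1$) to the signed graph $\Psi$ viewed as a $\mathbb{T}$-gain graph.
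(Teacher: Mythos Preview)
Your proposal is correct and matches the paper's own proof, which simply states that the result follows from Corollary~\ref{Cor3.2} by choosing $\varphi=\pm 1$ and $w=1$. Your additional remark that one may equivalently invoke Corollary~\ref{lm3.2} directly is also valid and amounts to the same specialization.
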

\begin{proof}
	Result follows from Corollary \ref{Cor3.2} by choosing $ \varphi=\pm 1 $ and $ w=1 $.
\end{proof}
\section{Characterizations of balanced $ \mathbb{T} $-gain graphs in terms of gain distance matrices}\label{Char_of_balanced}

In this section, we establish two characterizations for balanced $ \mathbb{T} $-gain graphs using the gain distance matrices. Let us define two complete $ \mathbb{T} $-gain graphs which are obtained from gain distance matrices $ D^{\max}_{<}(\Phi) $ and $ D^{\min}_{<}(\Phi) $.

\begin{definition}{\rm
	Let $ \Phi=(G, \varphi) $ be a $ \mathbb{T} $-gain graph and $`<`$ be an order on $V(G)$.  The complete $ \mathbb{T} $-gain graph with respect to $D^{\max}_{<}(\Phi) $, denoted by $ K^{D^{\max}_{<}}(\Phi) $,  is defined as follows:  keep  the edges of  $ \Phi $ unchanged, and join  non adjacent    vertices $v_i$ and $v_j$ with gain $ \varphi(\overrightarrow{e}_{i,j})= \varphi^{<}_{\max}(v_i,v_j)$ for all $v_i,v_j$. Similarly
 $ K^{D^{\min}_{<}}(\Phi) $ is defined  using  $D^{\min}_{<}(\Phi) $.}
	\end{definition}

For a $ \mathbb{T} $-gain graph $ \Phi=(G, \varphi) $ with order $<$, if $ D^{\max}_{<}(\Phi)=D^{\min}_{<}(\Phi) $, then the associated complete $ \mathbb{T} $-gain graphs $ K^{D^{\max}}(\Phi) $ and $K^{D^{\max}}(\Phi)  $ are the same,  and it is denoted by $ K^{D}(\Phi) $. Then the proof of the following theorem is easy to verify.

\begin{theorem}
	For a $ \mathbb{T} $-gain graph $ \Phi=(G, \varphi) $, the following are equivalent:
	\begin{enumerate}
		\item [(1)]  $ K^{D}(\Phi) $ is well defined.
		\item [(2)] $ K^{D^{\max}}(\Phi)=K^{D^{\max}}(\Phi) = K^{D^{\max}_{<}}(\Phi)=K^{D^{\min}_{<}}(\Phi)=K^{D}(\Phi) $.
		\item [(3)] $ D^{\max}_{<}(\Phi)=D^{\min}_{<}(\Phi)$, for some ordering $ < $.
	\end{enumerate}
\end{theorem}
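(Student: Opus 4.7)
The plan is to treat this as a structural companion to Theorem \ref{dist-comp-well}: once one pushes the equality of matrices through Theorem \ref{cor3.1} and Proposition \ref{prop.3.1}, the equivalence at the level of complete $\mathbb{T}$-gain graphs is just bookkeeping, because the edge-gains of $K^{D^{\max}_{<}}(\Phi)$ and $K^{D^{\min}_{<}}(\Phi)$ are read off directly from $\varphi^{<}_{\max}$ and $\varphi^{<}_{\min}$.

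First I would handle $(3) \Longrightarrow (2)$, which is the main direction. Assume $D^{\max}_{<_0}(\Phi)=D^{\min}_{<_0}(\Phi)$ for some ordering $<_0$. By Theorem \ref{cor3.1} (applied twice, once to move to the standard order $<$ and once to move from it to an arbitrary order $<_a$), we obtain $D^{\max}_{<_a}(\Phi)=D^{\min}_{<_a}(\Phi)$ for every ordering $<_a$; in particular every pair of vertices $v_i,v_j$ has all shortest $v_iv_j$-paths of a common gain, and that common gain is independent of the chosen ordering. Consequently $\varphi^{<}_{\max}(v_i,v_j)=\varphi^{<_a}_{\max}(v_i,v_j)=\varphi^{<}_{\min}(v_i,v_j)=\varphi^{<_a}_{\min}(v_i,v_j)$ for all $i,j$, so the four complete gain graphs $K^{D^{\max}_{<}}(\Phi)$, $K^{D^{\min}_{<}}(\Phi)$, $K^{D^{\max}_{<_a}}(\Phi)$, $K^{D^{\min}_{<_a}}(\Phi)$ have identical gain functions on every inserted edge (the edges of $\Phi$ itself are unchanged by definition). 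Invoking Proposition \ref{prop.3.1} under the standard order gives that $D^{\max}(\Phi)$ and $D^{\min}(\Phi)$ are well defined and equal, so the unsubscripted complete gain graphs $K^{D^{\max}}(\Phi)$ and $K^{D^{\min}}(\Phi)$ are also well defined and equal to the same object, and we can call this common complete gain graph $K^{D}(\Phi)$; this is exactly (2).

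The implication $(2) \Longrightarrow (1)$ is immediate from (2), since the displayed chain of equalities in (2) exhibits $K^{D}(\Phi)$ as a well-defined complete $\mathbb{T}$-gain graph. For $(1) \Longrightarrow (3)$, observe that by the convention established just before the theorem, saying $K^{D}(\Phi)$ is well defined means $K^{D^{\max}_{<}}(\Phi)=K^{D^{\min}_{<}}(\Phi)$ for the standard order $<$ (which is the context in which $K^{D}(\Phi)$ was declared); but equality of these two complete gain graphs forces $\varphi^{<}_{\max}(v_i,v_j)=\varphi^{<}_{\min}(v_i,v_j)$ for every pair $v_i,v_j$, whence $D^{\max}_{<}(\Phi)=D^{\min}_{<}(\Phi)$, which is (3) witnessed by the standard order.

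There is no real obstacle; the only mild subtlety is to make sure each notational identity in (2) is justified by the right lemma: the equalities $K^{D^{\max}_{<}}(\Phi)=K^{D^{\min}_{<}}(\Phi)$ come from (3) directly, the equalities with $K^{D^{\max}}(\Phi)$ and $K^{D^{\min}}(\Phi)$ require Proposition \ref{prop.3.1} to know those unsubscripted objects exist, and the order-independence across different orderings requires Theorem \ref{cor3.1}. Once these three references are lined up, the argument is completely routine.
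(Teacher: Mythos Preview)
Your proposal is correct and matches the spirit of the paper: the paper simply declares the proof ``easy to verify'' and omits it entirely, so your cycle $(3)\Rightarrow(2)\Rightarrow(1)\Rightarrow(3)$ via Theorem~\ref{cor3.1} and Proposition~\ref{prop.3.1} is exactly the routine verification the authors had in mind. One minor remark: for $(1)\Rightarrow(3)$ you could be even more direct, since by the sentence immediately preceding the theorem $K^{D}(\Phi)$ is \emph{defined} precisely when $D^{\max}_{<}(\Phi)=D^{\min}_{<}(\Phi)$, so (3) is literally the defining condition for (1); your detour through equality of the complete gain graphs works too (adjacent pairs trivially agree since the unique shortest path is the edge itself), but it is not needed.
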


\begin{theorem}\label{Th5.1}
	Let $ \Phi=(G, \varphi) $ be a $ \mathbb{T} $-gain graph with vertex order $<$. Then the following statements are equivalent.
	\begin{enumerate}
		\item [(i)] $ \Phi $ is balanced.
		\item [(ii)] $ K^{D^{\max}}(\Phi) $ is balanced.
		\item [(iii)] $ K^{D^{\min}}(\Phi) $ is balanced.
		\item [(iv)] $ D^{\max}(\Phi) =D^{\min}(\Phi)$ and associated complete $ \mathbb{T} $-gain graph $ K^{D}(\Phi) $ is balanced.
	\end{enumerate}
\end{theorem}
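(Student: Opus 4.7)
The plan is to prove (i) $\Rightarrow$ (iv), then the two immediate implications (iv) $\Rightarrow$ (ii) and (iv) $\Rightarrow$ (iii), and finally to close the circuit with (ii) $\Rightarrow$ (i) and (iii) $\Rightarrow$ (i). The main tool is Lemma~\ref{lm3.1}: in a balanced $\mathbb{T}$-gain graph any two oriented $(s,t)$-paths carry the same gain, from which cycle decomposition extends the statement to show that every closed walk has gain $1$.

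For (i) $\Rightarrow$ (iv), assume $\Phi$ is balanced. Proposition~\ref{prop3.3}(2) gives that $\Phi$ is order-independent, so $D^{\max}(\Phi)$ and $D^{\min}(\Phi)$ are well defined. Lemma~\ref{lm3.1} forces any two shortest $v_iv_j$-paths to share a common gain, so $\varphi^{<}_{\max}(v_i,v_j)=\varphi^{<}_{\min}(v_i,v_j)$ for every pair, and therefore $D^{\max}(\Phi)=D^{\min}(\Phi)$, with $K^{D}(\Phi)$ well defined. To see $K^{D}(\Phi)$ is balanced, fix any cycle $C=u_0u_1\cdots u_{k-1}u_0$ in $K^{D}(\Phi)$. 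For each consecutive pair $u_m u_{m+1}$, replace the corresponding edge of $K^D(\Phi)$ by any shortest $u_m u_{m+1}$-path in $\Phi$; by the definition of $K^D(\Phi)$, the edge gain equals the path gain. Concatenating yields a closed walk $W$ in $\Phi$ with $\varphi(W)=\varphi(C)$, and balance of $\Phi$ gives $\varphi(W)=1$, hence $\varphi(C)=1$.

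The implications (iv) $\Rightarrow$ (ii) and (iv) $\Rightarrow$ (iii) are immediate: when $D^{\max}(\Phi)=D^{\min}(\Phi)$, the complete gain graphs $K^{D^{\max}}(\Phi)$ and $K^{D^{\min}}(\Phi)$ both coincide with $K^{D}(\Phi)$, which is balanced. For (ii) $\Rightarrow$ (i) and (iii) $\Rightarrow$ (i), observe that by construction the edge set of $\Phi$ is contained in that of $K^{D^{\max}}(\Phi)$ (respectively $K^{D^{\min}}(\Phi)$), with gains preserved on shared edges. Every cycle of $\Phi$ is thus a cycle of the completion carrying the same gain; if the completion is balanced, that gain equals $1$, so $\Phi$ is balanced.

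The main technical step is the second half of (i) $\Rightarrow$ (iv), namely the verification that $K^{D}(\Phi)$ is itself balanced. The difficulty is that cycles in $K^{D}(\Phi)$ are not cycles in $\Phi$, and the shortcut edges carry gains that are only defined indirectly through shortest paths; the shortest-path replacement argument circumvents this by converting each cycle of $K^D(\Phi)$ into a closed walk of $\Phi$ with the same gain, where balance of $\Phi$ can be applied directly. The remaining implications reduce to straightforward subgraph and matrix identifications.
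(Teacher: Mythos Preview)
Your proof is correct and follows the same overall implication structure as the paper: (i) $\Rightarrow$ (iv), then (iv) $\Rightarrow$ (ii),(iii) trivially, and (ii),(iii) $\Rightarrow$ (i) via the subgraph observation. The one place where your argument genuinely differs is in the verification that $K^{D}(\Phi)$ is balanced. The paper fixes a normal spanning tree $T$ of $G$, observes that each fundamental cycle of $K^{D}(\Phi)$ with respect to $T$ consists of one (possibly new) edge together with a $T$-path in $\Phi$, checks that such a cycle is neutral using Lemma~\ref{lm3.1}, and then invokes Lemma~\ref{lm2.1} to conclude balance from neutrality of fundamental cycles. You instead replace every edge of an arbitrary cycle in $K^{D}(\Phi)$ by a shortest path in $\Phi$, obtaining a closed walk with the same gain, and use that a balanced gain graph has trivial gain on all closed walks. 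Your route is slightly more elementary in that it avoids the spanning-tree machinery of Lemma~\ref{lm2.1}; the paper's route has the advantage that the ``closed walks have gain $1$'' fact need not be justified separately, since only genuine cycles (fundamental ones) are ever examined.
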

\begin{proof}
	$ (i) \implies (iv) $	Let $ V(G)=\{v_1, v_2, \dots, v_n\} $. Suppose $ \Phi $ is balanced. Let $ v_i, v_j\in V(G) $. Then, by Lemma \ref{lm3.1}, all shortest oriented paths $ v_iPv_j$ have the same gain. Thus $ \varphi_{\max}^{<}(v_i,v_j)=\varphi_{\min}^{<}(v_i,v_j) $. Therefore, $ D^{\max}_{<}(\Phi)=D^{\min}_{<}(\Phi) $. By Proposition \ref{prop.3.1} and Corollary \ref{cor3.1}, $ D^{\max}(\Phi)=D^{\min}(\Phi) $. Hence  $ K^{D}(\Phi) $ is well defined.\\
	{\bf Claim:} $ K^{D}(\Phi) $ is balanced.\\
	Let $ v_i\nsim v_j $ in $ G $ and $ e_{i,j} $ be the edge joining $v_i$ and $v_j$ in $ K^{D}(\Phi)$. For every oriented path $ v_iPv_j$ in $ \Phi $ have the same gain.  So every cycle passing through the edge $ e_{i,j} $ has gain $ 1 $. Let $ T $ be a normal spanning tree of $ G $. Suppose $ v_i\nsim v_j $ in $ G $. In $ T $, joining the edge $ e_{i,j} $ creates a fundamental cycle of $K^{D}(\Phi) $, say $ C_T $. Now by previous observation, $ \varphi(C_T)=1$. Thus all the fundamental cycles in $ K^{D}(\Phi)  $ are neutral, and hence, by Lemma \ref{lm2.1}, $ K^{D}(\Phi)$ is balanced.

If $ K^{D}(\Phi) $ is balanced, then,  as $ \Phi $ is a subgraph of $ K^{D}(\Phi) $, so $ \Phi $ is balanced. Therefore, $ (iv) \implies (i), (iii) \implies (i)$ and $ (ii) \implies (i) $ follow.

	The proofs  $ (iv) \implies (iii) $ and $ (iv) \implies (ii) $ are easy to see.
\end{proof}
\begin{theorem}\label{th5.2}
Let $ \Phi=(G, \varphi) $ be a $ \mathbb{T} $-gain graph with vertex order $<$.Then the following statements  are equivalent:
	\begin{enumerate}
		\item [(i)] $ \Phi $ is balanced.
		\item [(ii)] $ D^{\max}(\Phi) $ is cospectral with $ D(G) $.
		\item [(iii)]$ D^{\min}(\Phi) $ is cospectral with $ D(G) $.
		\item[(iv)] The largest eigenvalue of $D^{\max}(\Phi)   $  and $ D(G) $ are  equal.
		\item [(v)]The largest eigenvalue of $D^{\min}(\Phi)   $  and $ D(G) $ are  equal.
	\end{enumerate}
\end{theorem}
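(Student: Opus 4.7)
The plan is to realize each gain distance matrix as the adjacency matrix of a positively weighted complete $\mathbb{T}$-gain graph, and then invoke the spectral balance criteria from Section \ref{Positively_weighted} together with Theorem \ref{Th5.1}.

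The pivotal identification is as follows. Fix any vertex order $<$, consider the complete $\mathbb{T}$-gain graph $K^{D^{\max}_{<}}(\Phi)$, and equip its underlying $K_n$ with the weight function $w(e_{i,j}) := d(v_i, v_j)$, which is strictly positive since $G$ is connected. Directly from the definitions (noting that for adjacent vertices in $G$ the shortest path is the edge itself, so the gain on that edge agrees with $\varphi^{<}_{\max}(v_i, v_j)$), one checks that
\begin{equation*}
    D^{\max}_{<}(\Phi) \;=\; A\!\left(K^{D^{\max}_{<}}(\Phi)_w\right), \qquad D(G) \;=\; A\!\left((K_n)_w\right),
\end{equation*}
and similarly for $D^{\min}_{<}$. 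Moreover, $(K_n)_w$ is connected and $w > 0$, so $K^{D^{\max}_{<}}(\Phi)_w$ is a connected positively weighted $\mathbb{T}$-gain graph, placing us squarely in the setting of Section \ref{Positively_weighted}.

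For $(i)\Rightarrow(ii),(iii),(iv),(v)$: if $\Phi$ is balanced then by Proposition \ref{prop3.3} it is order-independent, so $D^{\max}(\Phi)$ and $D^{\min}(\Phi)$ are well defined and, by Theorem \ref{Th5.1}, $K^{D}(\Phi)$ is balanced. Applying Theorem \ref{th4.2} to the positively weighted gain graph $K^{D}(\Phi)_w$ gives $\spec(D^{\max}(\Phi)) = \spec(D^{\min}(\Phi)) = \spec(D(G))$, yielding (ii) and (iii); then (iv) and (v) follow since cospectrality implies equality of largest eigenvalues. For the converses, it suffices to prove $(iv)\Rightarrow(i)$ and $(v)\Rightarrow(i)$, because $(ii)\Rightarrow(iv)$ and $(iii)\Rightarrow(v)$ trivially. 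Working with an arbitrary order $<$, equality of the largest eigenvalues of $D^{\max}_{<}(\Phi) = A(K^{D^{\max}_{<}}(\Phi)_w)$ and $D(G) = A((K_n)_w)$ forces $K^{D^{\max}_{<}}(\Phi)$ to be balanced by Corollary \ref{Cor3.2}, and then Theorem \ref{Th5.1} gives that $\Phi$ itself is balanced. The $D^{\min}_{<}$ argument is identical.

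The main obstacle will be the bookkeeping around well-definedness: statements (ii) and (iv) nominally require the unsubscripted matrix $D^{\max}(\Phi)$ to exist, which presupposes order-independence. I will side-step this by running the converse directions with the always-defined $D^{\max}_{<}(\Phi)$ for an arbitrary order $<$, and recovering order-independence only a posteriori, once $\Phi$ has been shown to be balanced (via Proposition \ref{prop3.3}).
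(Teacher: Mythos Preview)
Your proposal is correct and follows essentially the same route as the paper: identify $D^{\max}_{<}(\Phi)$ as the adjacency matrix of the positively weighted complete gain graph $(K_n,\psi,w)$ with $w(e_{i,j})=d(v_i,v_j)$, then invoke Theorem~\ref{th4.2} for the cospectrality equivalences and Corollary~\ref{Cor3.2} for the largest-eigenvalue equivalences, linking balance of $\Phi$ to balance of the complete gain graph via Theorem~\ref{Th5.1}. Your explicit handling of the well-definedness of $D^{\max}(\Phi)$ in the converse directions---working with $D^{\max}_{<}(\Phi)$ first and recovering order-independence a posteriori---is in fact a bit more careful than the paper, which tacitly assumes the unsubscripted matrices make sense throughout.
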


\begin{proof}

	Let $ V(G)=\{ v_1, v_2, \dots, v_n\} $, and $ \Phi $ be balanced.  Then by Theorem \ref{Th5.1}, $ K^{D^{\max}}(\Phi)$ is balanced. Note that $K^{D^{\max}}(\Phi)=(K_n, \psi)  $ with $ \psi(\overrightarrow{e}_{i,j})=\varphi_{\max}(v_i,v_j)=\varphi(v_iPv_j)$, where $v_iPv_j$  is a shortest path in $ \Phi $. Consider $ {D^{\max}}(\Phi)$ as the adjacency matrix of a positively weighted $ \mathbb{T} $-gain graph $ (K_n, \psi, w) $  with weight function $w: E(K_n) \rightarrow \mathbb{R}^{+}$ is defined as $ w(e_{i,j})=d(v_i,v_j) $, where $ d(v_i,v_j) $ is the distance between $ v_i$ and $ v_j $ in $ G $. Then the adjacency matrix of $ (K_n,  w)$ is same as  $D(G) $. By  Theorem \ref{th4.2}  and Theorem \ref{Th5.1}, $ \Phi $ is balanced if and only if $ D^{\max}(\Phi) $ is cospectral with $ D(G) $. Thus $(i) \Leftrightarrow (ii)$.
	
	Now,  by Corollary \ref{Cor3.2}, $ \Phi $ is balanced if and only if the largest eigenvalue of $D^{\max}(\Phi)   $  and $ D(G) $ coincide. This proves 	$ (i) \Leftrightarrow (iv) $.
	
	The proofs of  $ (i) \Leftrightarrow (iii) $ and $ (i) \Leftrightarrow (v) $ are similar .		 
\end{proof}
Both of the above characterizations extend the corresponding known characterizations \cite[Theorem 3.1]{Sign_distance} and \cite[Theorem 3.5]{Sign_distance} for signed graph.
\begin{corollary}\label{cor5.1}
	Let $ \Phi=(G, \varphi) $ be a $ \mathbb{T} $-gain graph. Then $ \Phi $ is balanced if and only if $ D(\Phi) $ exist and it is cospectral with $ D(G) $.
\end{corollary}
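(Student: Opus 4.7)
The plan is to deduce this corollary by combining the two main theorems just proved in the section, namely Theorem \ref{Th5.1} (which handles the distance compatibility/existence aspect) and Theorem \ref{th5.2} (which handles the cospectrality with $D(G)$). The key observation is that the statement of the corollary packages both pieces of information into a single biconditional, so the work is essentially bookkeeping between the definitions.

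For the forward implication, I would start by assuming $\Phi$ is balanced. First I would invoke Theorem \ref{Th5.1}, specifically the equivalence $(i) \Leftrightarrow (iv)$, which yields $D^{\max}(\Phi) = D^{\min}(\Phi)$. By Theorem \ref{dist-comp-well}, this means $\Phi$ is distance compatible and $D(\Phi)$ is well defined, with $D(\Phi) = D^{\max}(\Phi) = D^{\min}(\Phi)$. Then I would invoke Theorem \ref{th5.2}, specifically $(i) \Leftrightarrow (ii)$, to conclude that $D^{\max}(\Phi)$ is cospectral with $D(G)$; since $D(\Phi) = D^{\max}(\Phi)$, we get that $D(\Phi)$ is cospectral with $D(G)$.

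For the reverse implication, I would assume $D(\Phi)$ exists and is cospectral with $D(G)$. The existence of $D(\Phi)$ means $\Phi$ is distance compatible, and by Theorem \ref{dist-comp-well} we have $D(\Phi) = D^{\max}(\Phi)$. Hence $D^{\max}(\Phi)$ is cospectral with $D(G)$, and Theorem \ref{th5.2} (implication $(ii) \Rightarrow (i)$) immediately gives that $\Phi$ is balanced.

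Since both directions reduce to direct applications of already-proved theorems, there is no serious obstacle here; the only subtle point to state carefully is that ``$D(\Phi)$ exists'' is exactly the condition of distance compatibility (equivalently, $D^{\max}(\Phi) = D^{\min}(\Phi)$), so Theorem \ref{dist-comp-well} should be cited explicitly to bridge the two formulations. The proof will be quite short, essentially two or three sentences per direction.
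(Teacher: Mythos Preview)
Your proposal is correct and matches the paper's intent: the paper states Corollary~\ref{cor5.1} without proof immediately after Theorem~\ref{th5.2}, leaving it as a direct consequence of Theorems~\ref{Th5.1} and~\ref{th5.2} together with the characterization of distance compatibility in Theorem~\ref{dist-comp-well}, exactly as you outline.
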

\section{Distance compatible gain graphs}\label{Dist_compt}
	
In this final section, we establish a couple of characterizations for distance compatible $ \mathbb{T} $-gain graphs. These results extend the corresponding known results for signed graph \cite{Sign_distance}.
\begin{theorem}\label{Th6.1}
	Let $ \Phi=(G, \varphi) $ be any bipartite $ \mathbb{T} $-gain graph. Then $ \Phi $ is distance compatible if and only if $ \Phi $ is balanced.
\end{theorem}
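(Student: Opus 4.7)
My plan is to prove the two directions of the biconditional separately. The forward direction is a short consequence of Lemma~\ref{lm3.1} and does not use bipartiteness; the reverse direction is where the bipartite hypothesis enters essentially.

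For the forward direction, I assume $\Phi$ is balanced and apply Lemma~\ref{lm3.1}: every directed $(s,t)$-path has the same gain, so in particular all shortest $(s,t)$-paths share a common gain. Consequently $\varphi^{<}_{\max}(s,t) = \varphi^{<}_{\min}(s,t)$ for every pair $s,t$, yielding $D^{\max}_{<}(\Phi) = D^{\min}_{<}(\Phi)$, which by definition says $\Phi$ is distance compatible.

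For the reverse direction, I assume $\Phi$ is bipartite and distance compatible. Distance compatibility guarantees that for every ordered pair $(s,t)$ all shortest $s$-$t$-paths share a common gain $\alpha(s,t) \in \mathbb{T}$, with $\alpha(t,s) = \overline{\alpha(s,t)}$. I then fix a root $r \in V(G)$ and define a switching function $\zeta : V(G) \to \mathbb{T}$ by $\zeta(r) = 1$ and $\zeta(v) = \alpha(r,v)$ for $v \neq r$, and aim to show
\[
\varphi(\overrightarrow{e}_{s,t}) = \zeta(s)^{-1}\zeta(t) \quad \text{for every } e_{s,t} \in E(G).
\]
This identity exhibits $\Phi$ as switching equivalent to the trivial gain graph $(G,1)$, which is balanced; since balance is preserved under switching equivalence, it follows that $\Phi$ is balanced.

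The main step, and the only place where bipartiteness is used, is the displayed identity above. Given an edge $e_{s,t}$, the bipartiteness of $G$ places $s$ and $t$ in opposite parts of the bipartition, so $d(r,s)$ and $d(r,t)$ have opposite parities; combined with the triangle inequality this forces $|d(r,s) - d(r,t)| = 1$. Assuming without loss of generality $d(r,t) = d(r,s) + 1$, the concatenation of any shortest $r$-$s$-path with $\overrightarrow{e}_{s,t}$ is a path of length $d(r,t)$, and hence a shortest $r$-$t$-path; by distance compatibility its gain equals $\alpha(r,t) = \zeta(t)$, giving $\zeta(s)\,\varphi(\overrightarrow{e}_{s,t}) = \zeta(t)$ as required. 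The essential role of bipartiteness is precisely to rule out edges with $d(r,s) = d(r,t)$: in a non-bipartite graph such an edge need not lie on any shortest path from $r$, and $\varphi$ could not be recovered from shortest-path gains alone, so the argument would break.
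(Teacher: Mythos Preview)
Your proof is correct, and it takes a genuinely different route from the paper's.

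The paper argues the nontrivial direction by contradiction: assuming $\Phi$ is unbalanced, it picks an unbalanced (necessarily even) cycle $C$ of minimum length, takes two diametrically opposite vertices $v_i,v_j$ on $C$, and argues that the two halves of $C$ are shortest $v_i$--$v_j$ paths with different gains (minimality of $C$ is used to rule out a strictly shorter $v_i$--$v_j$ path, which would create a smaller unbalanced cycle). This directly contradicts distance compatibility.

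Your argument is instead constructive: from distance compatibility you extract a well-defined potential $\zeta(v)=\alpha(r,v)$ and verify, edge by edge, that $\varphi(\overrightarrow{e}_{s,t})=\zeta(s)^{-1}\zeta(t)$, thereby exhibiting an explicit switching to $(G,1)$. The bipartite hypothesis is used exactly where it should be: it forces $|d(r,s)-d(r,t)|=1$ for every edge, so that every edge extends some shortest path from $r$ and hence is governed by the shortest-path gains. Your proof makes the role of bipartiteness very transparent (it is precisely the ``no level edges in a BFS from $r$'' condition), and it yields the switching function for free. The paper's approach, on the other hand, pinpoints an explicit incompatible pair of vertices whenever $\Phi$ is unbalanced, which dovetails with the later characterizations of distance incompatibility in Section~\ref{Dist_compt} (Lemma~\ref{lm7.1} and Theorem~\ref{Th6.3}). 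Both arguments are short and self-contained; yours is a clean alternative.
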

\begin{proof}
	If $ \Phi $ is balanced, by Theorem \ref{th5.2}, $ \Phi $ is distance compatible. Conversely, suppose $ \Phi $ is distance compatible. Then, by Proposition \ref{prop3.3}, $ \Phi $ is order-independent and $ D^{\max}(\Phi)=D^{\min}(\Phi) $.  Suppose that $ \Phi $ is unbalanced. Since $ \Phi $ is bipartite,  there exists an unbalanced  even cycle $ C $. Let $ v_i $ and $ v_j $ be two diametrical vertices of $ C $. Then $ C $ contains two disjoint paths $ v_iP_1v_j $ and $ v_iP_2v_j $ of  same length. Since $ \varphi(C)\ne 1 $ and $ \varphi(\overrightarrow{C})=\varphi(v_iP_1v_j)\varphi(v_jP_2v_i)\ne 1$, so $ \varphi(v_iP_1v_j)\ne \varphi(v_iP_2v_j) $.\\
	{\bf Claim:} $ v_iP_1v_j $ and $ v_iP_2v_j $ are shortest paths between $ v_i$  and $ v_j $.\\
	Suppose $ v_iP_1v_j $ and $ v_iP_2v_j $ are not shortest paths. Let  $ v_iPv_j $ be a shortest path. Then at least one of the even cycle formed by $ v_iP_1v_j  $, $ v_iPv_j $ and $ v_iP_2v_j  $, $ v_iPv_j $ is unbalanced, and has length strictly smaller than that of  $ C $, which is a contradiction. Since  $ \varphi(v_iP_1v_j)\ne \varphi(v_iP_2v_j) $,  for any ordered vertex set $ (V(G),<) $, $ \varphi_{\max}^{<}(v_i,v_j)\ne \varphi_{\min}^{<}(v_i,v_j) $, a contradiction. Thus $ \Phi $ is balanced.
\end{proof}
A \emph{cut vertex} in a graph $G$ is a vertex whose removal creates more components than the number of components of $G$. A \emph{block} of a graph $ G $ is a maximum connected subgraph of $ G $ that has no cut vertex.

\begin{theorem}\label{Th6.2}
	Let $ \Phi=(G, \varphi) $ be a $ \mathbb{T} $-gain graph. Then,  $ \Phi $ is distance compatible if and only if every block of  $\Phi$ is distance compatible.
\end{theorem}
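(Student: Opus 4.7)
The plan is to reduce the statement to a single structural fact about shortest paths in graphs with cut vertices: if $u$ and $v$ lie in the same block $B$ of $G$, then every shortest $uv$-path in $G$ is entirely contained in $B$; and if $u,v$ lie in different blocks, then every $uv$-path (in particular every shortest one) must traverse a uniquely determined sequence of cut vertices dictated by the block-cut tree of $G$. This is a well-known consequence of the fact that a cut vertex separates the graph, so any path that leaves a block and comes back must revisit a cut vertex and therefore cannot be a shortest path. I would state this as a short observation at the beginning of the proof.

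For the forward direction, assume $\Phi$ is distance compatible. Let $B$ be a block and let $\Phi_B$ denote the induced $\mathbb{T}$-gain subgraph. Given $u,v\in V(B)$, the structural fact above implies $d_B(u,v)=d_G(u,v)$ and that the set of shortest $uv$-paths in $B$ equals the set of shortest $uv$-paths in $G$. Since $\Phi$ is distance compatible, all such paths carry the same gain by Proposition \ref{prop.3.1} and Lemma \ref{lm3.1}'s underlying idea applied to the distance condition, so $\varphi^{<}_{\max}(u,v)=\varphi^{<}_{\min}(u,v)$ in $\Phi_B$ as well. Hence $D^{\max}_{<}(\Phi_B)=D^{\min}_{<}(\Phi_B)$, and $\Phi_B$ is distance compatible.

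For the converse, assume every block of $\Phi$ is distance compatible. Let $u,v\in V(G)$ be arbitrary. If $u,v$ lie in a common block $B$, then again every shortest $uv$-path of $G$ stays in $B$, and by hypothesis all such paths in $\Phi_B$ share a common gain, so $\varphi^{<}_{\max}(u,v)=\varphi^{<}_{\min}(u,v)$ in $\Phi$. If $u$ and $v$ lie in different blocks, let $B_1,B_2,\dots,B_k$ be the unique sequence of blocks and $u=c_0,c_1,\dots,c_{k-1},c_k=v$ the corresponding cut vertices with $c_{i-1},c_i\in V(B_i)$, forced by the block-cut tree. Any shortest $uv$-path of $G$ decomposes uniquely as a concatenation of shortest $c_{i-1}c_i$-paths inside $B_i$ for $i=1,\dots,k$, and the length $d_G(u,v)=\sum_{i=1}^{k}d_{B_i}(c_{i-1},c_i)$ depends only on this decomposition. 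By the block hypothesis, each factor $\varphi(c_{i-1}P_i c_i)$ is independent of the chosen shortest path within $B_i$, so the product
\[
\varphi(uPv)=\prod_{i=1}^{k}\varphi(c_{i-1}P_i c_i)
\]
is the same for every shortest $uv$-path in $\Phi$. Hence $\varphi^{<}_{\max}(u,v)=\varphi^{<}_{\min}(u,v)$, proving $D^{\max}_{<}(\Phi)=D^{\min}_{<}(\Phi)$, i.e.\ $\Phi$ is distance compatible.

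The only non-routine ingredient is the block-cut structural fact about shortest paths; once it is in place, both directions follow by direct gain bookkeeping, and the main obstacle is simply to articulate cleanly that a shortest $uv$-path across multiple blocks is forced to pass through the cut vertices in the prescribed order and splits as a concatenation of shortest paths within each block.
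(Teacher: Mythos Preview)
Your proposal is correct and follows essentially the same approach as the paper: decompose any shortest path between vertices in different blocks through the forced cut vertices and use distance compatibility within each block to conclude that the overall gain is determined. Your treatment is in fact more careful than the paper's, which handles only two blocks and asserts that the intermediate cut vertices are joined by a \emph{unique} path (a claim that is not literally true), whereas you invoke the full block--cut tree decomposition; the paper also dismisses the forward direction as ``easy to verify,'' while you spell it out via the fact that shortest paths between two vertices of a block stay in that block. One small remark: the appeal to Lemma~\ref{lm3.1} is out of place, since that lemma concerns balanced gain graphs rather than distance compatibility; the fact you actually need---that distance compatibility forces all shortest $uv$-paths to have the same gain---is precisely the content of the proof of Proposition~\ref{prop.3.1}, so citing that alone suffices.
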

\begin{proof}
	Let $ B_1,B_2, \dots, B_k $ be the blocks of $\Phi$. Suppose every block is distance compatible. Let $s,t\in V(G)$. If $ s $ and $ t $  are in the same block then they are distance compatible. Suppose $ s $ and $ t $ are in different blocks. Without loss of generality, suppose $ s $ is in $ B_1 $ and $ t $ is in $ B_2 $. Then any path $ sPt $ in $ G $ must passes through the cut vertices $ v_i $ and  $ v_j $ where $ v_i $ and $ v_j $ are in $ B_1 $ and $ B_2 $, respectively ($ v_i $ may be same as $ v_j $). Any shortest path $ sPt$  can be decompose into $ sPv_i\cup v_iPv_j\cup v_jPt $. Since $ B_1 $ is distance compatible, so any shortest path $ sPv_i $ has unique gain. As the vertices $ v_i $ and $ v_j $ are connected by a unique path, so $ \varphi(v_iPv_j) $ is unique. Proofs of the other cases are similar. Therefore, any shortest path from $ s $ to $ t $ has same gain. Thus $ s $ and $ t $ are distance compatible. Hence $ \Phi $ is distance compatible. Converse is easy to verify.
\end{proof}

Let $ \Phi=(G, \varphi) $ be $ \mathbb{T} $-gain graph with the standard order $<$ on the vertex set.  Let $ s,t \in V( G) $. If $ \varphi_{\max}^{<}(s,t)= \varphi_{\min}^{<}(s,t)$, then $ s $ and $ t $ are called distance compatible. Note that $ \varphi_{\max}^{<}(s,t)= \varphi_{\min}^{<}(s,t)$ if and only if $ \varphi_{\max}^{<_a}(s,t)= \varphi_{\min}^{<_a}(s,t)$, for any other vertex order $<_a$. Therefore, the vertices $ s $ and $ t $ are called \emph{distance-incompatible} if for some  order $<_a$,  $ \varphi_{\max}^{<_a}(s,t)\ne \varphi_{\min}^{<_a}(s,t) $ holds.
\begin{lemma}\label{lm7.1}
	Let $ \Phi=(G, \varphi) $ be a $ 2 $-connected non-geodetic $ \mathbb{T} $-gain graph. If $ s $ and $ t $ are two incompatible vertices of least distance in $ G $ then there exists at least two internally disjoint shortest paths between $ s $ and $ t $ which have different gains.
\end{lemma}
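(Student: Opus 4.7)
The plan is to argue by contradiction, exploiting the minimality of $d(s,t)$ among distance-incompatible pairs. First, I unpack the definition: since $s$ and $t$ are distance-incompatible, we have $\varphi^{<}_{\max}(s,t) \ne \varphi^{<}_{\min}(s,t)$ for some (equivalently, by the order-independence noted just before the lemma, any) vertex order, which forces the existence of at least two shortest $s$-$t$ paths with distinct gains. Pick any two such paths $P_1$ and $P_2$.

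The core step is to show that $P_1$ and $P_2$ are automatically internally disjoint. Suppose, for contradiction, that they share a common internal vertex $u$. Since every sub-path of a shortest path is itself a shortest path, I decompose $P_i = sQ_i u \cup u R_i t$ for $i=1,2$, where each $Q_i$ is a shortest $s$-$u$ path, each $R_i$ is a shortest $u$-$t$ path, and $0 < d(s,u), d(u,t) < d(s,t)$. Now split into cases. If $\varphi(Q_1) \ne \varphi(Q_2)$, then $s$ and $u$ admit two shortest paths of different gains, so the pair $\{s,u\}$ is distance-incompatible at distance strictly less than $d(s,t)$, contradicting the minimality hypothesis. Otherwise $\varphi(Q_1) = \varphi(Q_2)$; since gains multiply along concatenations with $\varphi(P_i) = \varphi(Q_i)\varphi(R_i)$, the assumption $\varphi(P_1) \ne \varphi(P_2)$ forces $\varphi(R_1) \ne \varphi(R_2)$, and then the pair $\{u,t\}$ is distance-incompatible at distance strictly less than $d(s,t)$, again contradicting minimality.

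Hence no internal vertex is shared, so $P_1$ and $P_2$ are internally disjoint shortest $s$-$t$ paths whose gains differ, as required. The delicate point I expect is the opening translation from $\varphi^{<}_{\max}(s,t) \ne \varphi^{<}_{\min}(s,t)$ to the existence of two shortest paths with distinct gains; this follows directly from the construction of $\varphi^{<}_{\max}$ and $\varphi^{<}_{\min}$ as gains of specific shortest paths, since if every shortest $s$-$t$ path carried the same gain the two extremal selections would necessarily coincide. The $2$-connectedness and non-geodetic hypotheses do not enter the argument directly; they simply guarantee that the setting of the lemma, namely a minimal distance-incompatible pair, is nonvacuous.
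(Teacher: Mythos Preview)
Your proof is correct and uses the same minimality-contradiction idea as the paper. The paper's execution is slightly more elaborate: instead of splitting at a single common internal vertex $u$ and comparing $\varphi(Q_1),\varphi(Q_2)$ and $\varphi(R_1),\varphi(R_2)$ directly, it enumerates all common internal vertices, factors the closed walk $sP_1t\cup tP_2s$ into the cycles they determine, picks one cycle $C_j$ with $\varphi(\overrightarrow{C_j})\neq 1$, and then reads off a closer incompatible pair as the two endpoints of that cycle. Your single-vertex split is a cleaner way to reach the same contradiction, and in fact proves the marginally stronger statement that \emph{every} pair of shortest $s$--$t$ paths with distinct gains is internally disjoint, not merely that one such pair exists.
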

\begin{proof}
	Since $ s,t $ are distance-incompatible and $ G $ is non-geodetic, so there exist at least two shortest paths say $sP_1t $ and $sP_2t$ such that $ \varphi(sP_1t)\ne \varphi(sP_2t) $. If $sP_1t$ and $sP_2t $ are internally disjoint, then we are done. Suppose $sP_1t$ and $sP_2t $  are not internally disjoint. Let  $ v_1, v_2, \dots,v_p $ be the common internal vertices of the paths $sP_1t$ and $sP_2t $ . Let $ C_1, C_2, \dots, C_r $ be the only cycles formed by $sP_1t$ and $sP_2t $. Thus $ \varphi(sP_1t)\varphi(tP_2s)=\sum\limits_{i=1}^{r} \varphi(\overrightarrow{C_i}) \ne 1$. Then there exist a cycle $ C_j $ which is not balanced. Let $ C_j $ be formed by $ v_jP_1v_{j+1} $ and $ v_jP_2v_{j+1} $. Since $ sP_1t $ and $ sP_2t $ are shortest paths, so  $ v_jP_1v_{j+1} $ and $ v_jP_2v_{j+1} $ must be shortest paths in between $ v_j $ and $ v_{j+1} $ and of same lengths. Also  $ \varphi(\overrightarrow{C_j})=\varphi(v_jP_1v_{j+1}) \varphi(v_{j+1}P_2v_{j})\ne 1$. Thus $ \varphi(v_jP_1v_{j+1}) \ne \varphi(v_jP_2v_{j+1})$. Hence $ v_j $ and $ v_{j+1} $ are distance-incompatible, and   distance between them in $ G $ is smaller than the distance between the vertices $ s $ and $ t $ in $ G $,  a contradiction.
\end{proof}
\begin{theorem}\label{Th6.3}
	Let $ \Phi=(G, \varphi) $ be any $ 2 $-connected non-geodetic $ \mathbb{T} $-gain graph. Then $ \Phi $ is distance-incompatible if and only if there is an unbalanced even cycle such that there exist two diametrically opposite vertices $ s $ and $ t $ which have no other smaller length path.
\end{theorem}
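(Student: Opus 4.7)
The plan is to prove both implications by exploiting Lemma \ref{lm7.1} together with a direct computation of the gain of the cycle formed by two internally disjoint shortest paths.

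For the forward direction, I would begin by assuming $\Phi$ is distance-incompatible and then choosing a distance-incompatible pair $(s,t)$ for which $d(s,t)$ is minimum; such a pair exists by hypothesis. Since $\Phi$ is $2$-connected and non-geodetic, Lemma \ref{lm7.1} applies and yields two internally disjoint shortest $s$-$t$ paths $sP_1t$ and $sP_2t$ with $\varphi(sP_1t)\neq\varphi(sP_2t)$. These two paths combine into a cycle $C$, and because both paths have length $d(s,t)$, the cycle has length $2d(s,t)$, hence is even. The vertices $s$ and $t$ lie at distance $d(s,t)=|C|/2$ from one another along $C$, so they are diametrically opposite; moreover, the very fact that $sP_1t$ is a shortest $s$-$t$ path in $G$ means no shorter path exists between them. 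Finally, writing $\varphi(\overrightarrow{C})=\varphi(sP_1t)\,\varphi(tP_2s)=\varphi(sP_1t)\,\overline{\varphi(sP_2t)}$ and using $\varphi(sP_1t)\neq\varphi(sP_2t)$, we conclude $\varphi(\overrightarrow{C})\neq 1$, so $C$ is unbalanced.

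For the converse, I would assume the existence of an unbalanced even cycle $C$ with a diametrical pair $s,t$ admitting no shorter $s$-$t$ path in $G$. The two arcs of $C$ from $s$ to $t$, call them $sP_1t$ and $sP_2t$, each have length $|C|/2$, and the hypothesis forces $d(s,t)=|C|/2$, so both arcs are shortest $s$-$t$ paths in $\Phi$. Unbalance of $C$ gives $\varphi(sP_1t)\,\overline{\varphi(sP_2t)}=\varphi(\overrightarrow{C})\neq 1$, so $\varphi(sP_1t)\neq\varphi(sP_2t)$. Then for the standard vertex order $<$ we obtain $\varphi^{<}_{\max}(s,t)\neq\varphi^{<}_{\min}(s,t)$, which says that $s$ and $t$ are distance-incompatible, and therefore $\Phi$ is distance-incompatible.

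The main obstacle is the forward direction, specifically verifying that the cycle produced from Lemma \ref{lm7.1} is genuinely an \emph{even} cycle in which $s$ and $t$ occupy diametrically opposite positions. This rests on two ingredients: the internal disjointness of the two paths (so that we really obtain a simple cycle rather than a theta-like structure), and the fact that both paths have the same length $d(s,t)$ (so that the cycle has length $2d(s,t)$ and the chosen pair is antipodal). The minimum-distance selection of $(s,t)$ is therefore essential, since it is precisely this choice that enables Lemma \ref{lm7.1} to be invoked and that rules out the existence of a strictly shorter $s$-$t$ path in $G$.
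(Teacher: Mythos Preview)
Your argument is correct and follows essentially the same route as the paper: select an incompatible pair at minimum distance, invoke Lemma~\ref{lm7.1} to obtain two internally disjoint shortest paths with different gains, and observe that their union is the required unbalanced even cycle; the converse is handled just as you indicate (the paper merely declares it ``easy to verify''). Your write-up is in fact more explicit than the paper's on both the evenness/antipodality check and the converse direction.
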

\begin{proof}
	If $ \Phi $ is distance-incompatible, then there exist vertices $ s,t $  which are distance-incompatible and of least distance. Then by Lemma \ref{lm7.1}, there exists a pair of shortest disjoint paths in between $ s $ and $ t $ such that they have different gains. Let $ C_{2l} $ be the cycle formed by the two disjoin paths. Therefore, $ \varphi(\overrightarrow{C_{2l}})\ne 1$ and $ s,t $ do not have any other shorter length path.
	
	The converse is easy to verify.
\end{proof}
\section*{Acknowledgments}
The authors thank Prof Thomas Zaslavsky, Binghamton University, for his comments and suggestions, which improved the paper's presentation.	Aniruddha Samanta thanks University Grants Commission(UGC)  for the financial support in the form of the Senior Research Fellowship (Ref.No:  19/06/2016(i)EU-V; Roll No. 423206). M. Rajesh Kannan would like to thank the SERB, Department of Science and Technology, India, for financial support through the projects MATRICS (MTR/2018/000986) and Early Career Research Award (ECR/2017/000643).
	
	\bibliographystyle{amsplain}
	\bibliography{raj-ani-ref1}

\end{document}